\documentclass[12pt]{amsart}

\usepackage[utf8]{inputenc}
\usepackage{amsfonts}
\usepackage{amsthm}
\usepackage{amssymb}
\usepackage{amsmath}
\usepackage{amscd}
\usepackage{latexsym,dsfont}
\usepackage{bbm}
\usepackage{mathrsfs}
\usepackage{esint} %seyma added%

\usepackage{times}
\usepackage{microtype}
\usepackage{setspace}
\usepackage[margin=1.2in]{geometry}

\usepackage{cite}

\usepackage[colorlinks=true, pdfstartview=FitV, linkcolor=blue,
citecolor=blue, urlcolor=blue]{hyperref}

\newtheorem{theorem}{Theorem}[section]
\newtheorem{lemma}[theorem]{Lemma}
\newtheorem{prop}[theorem]{Proposition}
\newtheorem{cor}[theorem]{Corollary}

\newtheorem{defn}[theorem]{Definition}

\newtheorem{hyp}[theorem]{Hypothesis}

\theoremstyle{definition}

\theoremstyle{remark}
\newtheorem{remark}[theorem]{Remark}

\numberwithin{equation}{section}
 \allowdisplaybreaks

\def\Xint#1{\mathchoice
   {\XXint\displaystyle\textstyle{#1}}%
   {\XXint\textstyle\scriptstyle{#1}}%
   {\XXint\scriptstyle\scriptscriptstyle{#1}}%
   {\XXint\scriptscriptstyle\scriptscriptstyle{#1}}%
   \!\int}
\def\XXint#1#2#3{{\setbox0=\hbox{$#1{#2#3}{\int}$}
     \vcenter{\hbox{$#2#3$}}\kern-.5\wd0}}

\def\avgint{\Xint-}

\DeclareMathOperator{\Div}{div}
\DeclareMathOperator{\supp}{supp}
\DeclareMathOperator{\dist}{dist}

\DeclareMathOperator{\grad}{\nabla}

\newcommand{\op}{{\mathrm{op}}}

\DeclareMathOperator{\lip}{\mathrm{Lip}}
\newcommand{\pp}{{p(\cdot)}}

\newcommand{\N}{\mathbb N}
\newcommand{\C}{\mathbb{C}}
\newcommand{\Z}{\mathbb Z}
\newcommand{\R}{\mathbb{R}}

\newcommand{\Q}{\mathcal{Q}}

\newcommand{\A}{\mathcal A}
\newcommand{\W}{\mathcal W}
\newcommand{\Hh}{\mathcal H}

\newcommand{\vecf}{\mathbf f}
\newcommand{\vecg}{\mathbf g}

\newcommand{\G}{\mathcal{G}}

\title
{On $H=W$ in Banach function spaces}

\author[\c{C}etin, Cruz-Uribe, Rodney] {  \c{S}eyma \c{C}etin, David Cruz-Uribe OFS,  Scott Rodney}

\address{\c{S}eyma \c{C}etİn \\
Dept. of Basic Sciences, Istanbul Gelisim University, Avcilar, Türkiye
}
\email{seycetin@gelisim.edu.tr}

\address{David Cruz-Uribe, OFS \\
Dept. of Mathematics \\
University of Alabama \\
 Tuscaloosa, AL 35487, USA}

\email{dcruzuribe@ua.edu}

\address{Scott Rodney\\
Dept. of Mathematics, Physics and Geology \\ 
Cape Breton University \\
Sydney, NS B1M1A2, Canada} 

\email{scott\_rodney@cbu.ca}

\thanks{The first author is supported by the TUBITAK 2219 - International Postdoctoral Research Fellowship Program for Turkish Citizens. The second author is partially supported by a Simons Foundation Travel Support for Mathematicians Grant and by NSF Grant DMS-2349550.  The third author is partially supported by an NSERC Development Grant and an NSERC Discovery Grant, Canada. The authors would like to thank Zoe Nieraeth for several helpful discussions about Banach function spaces and for pointing out several results in the literature. }

\keywords{Sobolev spaces, convolution operators, averaging operators, Banach function spaces}
\subjclass{46E35, 46E30, 42B25, 42B35}

\makeatletter
\def\l@subsection{\@tocline{2}{0pt}{4pc}{5pc}{}}
\makeatother

\begin{document}

\begin{abstract}
    In this paper we prove ``$H=W$" in the context of a Banach function space $X(\Omega)$.  Let $\Omega$ be a subset of $\R^n$ and denote by $W^1_X(\Omega)$ the collection of all those $f\in X(\Omega)$ whose distributional derivatives $\partial_jf$ are contained in $X(\Omega)$.  Our main result provide a small collection of “universal” hypotheses on $X(\Omega)$ that ensure $W^1_X(\Omega)$ is equal to $H^1_X(\Omega)$, the formal closure of $\textrm{Lip}(\Omega)\cap W^1_X(\Omega)$ with respect to the norm
    \[\|f\|_{W^1_X(\Omega)} = \|f\|_{X(\Omega)} + \|\nabla f\|_{X(\Omega)}.\]
    The main theorem has two corollaries.  The first gives a slightly stronger set of hypotheses for ``$H=W$", and the second gives density of $C^\infty_c(\R^n)$ in $W^1_X(\R^n)$.
\end{abstract}

\maketitle

\section{Introduction}

In this paper we extend the classical ``$H=W$" result of Meyers and Serrin~\cite{MR164252}  to the setting of Sobolev spaces defined over Banach function spaces.  Given an open subset $\Omega\subset \R^n$, $m\in \N$, and $1\leq p<\infty$, they proved that $H^{m,p}(\Omega)=W^{m,p}(\Omega)$,
where $W^{m,p}(\Omega)$ is the Banach space of  measurable functions $u$ on $\Omega$ that have distributional derivatives up to order $m$ in $L^p(\Omega)$, and $H^{m,p}(\Omega)$ is closure in $W^{m,p}(\Omega)$ of $C^\infty \cap W^{m,p}(\Omega)$ with respect to the norm
$$\|u\|_{W^{m,p}(\Omega)} = \sum_{|\alpha|\leq m} \|D^\alpha u\|_{L^p(\Omega)}. $$ 

To put our results in context, we give a brief history of interest in this problem, which was motivated through the study of PDEs.   Fabes, Kenig, and Serpioni, in their celebrated paper~\cite{MR643158}, developed a regularity theory for weak solutions of linear second order degenerate elliptic equations.  Given a bounded domain $\Omega\subset \R^n$ and $w\in A_2$, they studied the divergence form operator $Lu = -\Div(Q\nabla u)$, where $Q$ satisfies the  weighted ellipticity condition 
\begin{equation*}
\lambda w(x)|\xi|^2 \leq \langle Q(x)\xi,\xi\rangle  \leq \Lambda w(x)|\xi|^2
\end{equation*}
for all $\xi\in\R^n$ and a.e. $x\in \Omega$.  They defined weak solutions to be elements of the weighted space $H^1(\Omega, w)$,  the closure of $C^\infty(\overline{\Omega})$ with respect to the weighted Sobolev norm 
$$\|u\|_{H^1(\Omega,w)} = \left(\int_\Omega |u|^2\,wdx\right)^\frac{1}{2}
+ \left(\int_\Omega |\nabla u|^2\,wdx\right)^\frac{1}{2}.$$   
Given the identity ``$H=W$", it is natural to  ask if $H^1(\Omega,w)=W^{1,2}(\Omega,w)$  where $W^{1,2}(\Omega,w)$ is the collection measurable functions with distributional derivatives in $L^2(\Omega,w)$. (Somewhat surprisingly, this question was not addressed in~\cite{MR643158}.)   Kipel\"ainen~\cite[Theorem 2.5]{MR1246890} showed the following.  For $1\leq p<\infty$ and $w\in A_p$, define the weighted Sobolev space $W^{1,p}(\Omega,w)$ to be the collection of functions $u \in L^p(\Omega,w)$ whose distributional gradient $\nabla u$ is in $L^p(\Omega,w)$, and let $H^{1,p}(\Omega,w)$ be the completion of $C^\infty(\R^n)\cap W^{1,p}(\Omega,w)$ with respect to the norm
\[ \|u\|_{W^{1,p}(\Omega,w)} = \left(\int_\Omega |u|^p\,wdx\right)^\frac{1}{p}
+ \left(\int_\Omega |\nabla u|^p\,wdx\right)^\frac{1}{p}.\]
Then $W^{1,p}(\Omega,w)=H^{1,p}(\Omega,w)$.

Working in a very different direction,  Zhikov~\cite[Theorem 4.1]{MR1669639} proved $H=W$ in   weighted Sobolev spaces without assuming an $A_p$ condition.  Let $\Omega\subset \R^n$ be a bounded domain, and  let $F\subset \Omega$ be closed.  Let $\rho$ be  a non-negative,  locally integrable weight $\Omega$ whose degeneracies are contained in $F$: that is, for $\epsilon>0$ sufficiently small,  $1<c_1(\epsilon)\leq \rho(x)\leq c_2(\epsilon)<\infty$ for $x\in \Omega\setminus F_\epsilon$ where $F_\epsilon = \{x\in\Omega~:~\dist(x,F)\leq \epsilon\}$. With the same definition as given by Kipel\"ainen, he showed that $W^{1,2}(\Omega,w)=H^{1,2}(\Omega,w)$ if $F$ has capacity zero and $\rho$ satisfies the auxiliary estimate 
$$\rho(x)\leq \frac{C}{\textrm{Cap}(F_\epsilon)}$$
for $x\in \Omega\setminus F_\epsilon$ for all $\epsilon>0$ sufficiently small.

The results of  Kipel\"ainen were generalized to collections of Lipschitz vector fields by Franchi, Serapioni, and Serra Cassano~\cite{franchi1997} and by Garofalo and Nhieu~\cite{Garofalo1998}. Let $\mathcal{X}=\{X_1,...,X_m\}$ be a collection of vector fields $X_j = (c_{j1},...,c_{jn})\cdot \nabla$, where the coefficient functions $c_{ij}$ are Lipschitz in $\Omega$.  Given a weight $w$,  define $W^{1,p}_\mathcal{X}(\Omega,w)$ to be the collection of measurable functions $u\in L^p(\Omega,w)$ with $X_ju \in L^p(\Omega,w)$, $j=1,\ldots,m$.  Define $H^{1,p}_\mathcal{X}(\Omega,w)$ to be the closure of $C^\infty(\Omega)\cap W^{1,p}_\mathcal{X}(\Omega,w)$ with respect to the norm
$$\|u\|_{W^{1,p}_\mathcal{X}(\Omega,w)} = \|u\|_{L^p(\Omega,w)} + \sum_{i=1}^m \|X_j u\|_{L^p(\Omega,w)}.$$
Given this, they proved that $H^{1,p}_\mathcal{X}(\Omega,w) =W^{1,p}_\mathcal{X}(\Omega,w)$ whenever $w\in A_p$.

These results have been generalized in several directions.  We can rewrite $\|X_j u\|_{L^p(\Omega,w)}$ as $\|w^{1/p}X_j u\|_{L^p(\Omega)}$; written like this, the weight $w$ can be thought of as part of a rough vector field $\tilde{X}_j = w^{1/p}X_j$.  Sawyer and Wheeden~\cite{MR2574880} considered the problem of rough vector fields in the following setting.  Let  $(\Omega,\rho, |\cdot|)$ be a space of homogeneous type with doubling constant $D$. Define $\mathcal{X}=\{X_1,...,X_m\}$ to be a collection of vector fields $X_j = (v_{j1},...,v_{jn})\cdot \nabla$ such that for $1\leq j\leq m$ and $1\leq i\leq n$,  $v_{ji}\in L^D(\Omega)$ and $\nabla v_{ji}\in L^D(\Omega)$.  Further suppose that if $1\leq p<D$, then the Sobolev inequality 
$$\|\varphi\|_{L^q(E)} \leq C\left(\|\varphi\|_{L^p(\Omega)}+ \| \mathcal{X}\varphi\|_{L^p(\Omega)}\right)$$
holds for all $\varphi\in Lip_0(E)$, where $E\subset \Omega$ is open and $\frac{1}{q}=\frac{1}{p}-\frac{1}{D}$.  The constant $C=C(p,E,\mathcal{X})$ is independent of $\varphi$.
With these assumptions, they proved that $H^{1,p}_\mathcal{X}(\Omega) = W^{1,p}_\mathcal{X}(\Omega)$.  They also gave a counter example to show that it is necessary to assume that the Sobolev inequality holds.  See~\cite[section 5.1]{MR2574880} for further details. 

Moen and the second and third authors~\cite{MR3544941}, rather than considering vector fields, instead used matrix weights in the matrix $\A_p$ class. Given an $n\times n$, symmetric, positive semi-definite matrix $W$,  let $w=|W|_{\op}$ be the largest eigenvalue of $W$.  For $1\leq p<\infty$, they defined $W^{1,p}(W,\Omega)$ to be the space of all functions $f$ with distributional gradients that satisfy
\[ \|f\|_{\W^{1,p}(\Omega,W)} = \|f\|_{L^p(\Omega,w)} + \|W^{1/p}\grad f\|_{L^p(\Omega)} < \infty, \]
and they defined $\Hh^{1,p}(\Omega.W)$ to be the closure of $C^\infty(\Omega)\cap \W^{1,p}(\Omega,W)$ in $\W^{1,p}(\Omega,W)$.  They showed that if $W\in \A_p$, then $\W^{1,p}(\Omega,W) = \Hh^{1,p}(\Omega,W)$.  As an application they studied the partial regularity of degenerate $p$-Laplacians and mappings of finite distortion.

The second author and Fiorenza~\cite{MR3026953}, and separately, Diening, {\em et al.}~\cite{MR2790542} studied $H=W$ in the context of the variable exponent Lebesgue spaces $L^\pp(\Omega)$. These are Banach function spaces that generalize the classical $L^p$ spaces, replacing the constant exponent $p$ with a variable exponent function $\pp$.  They showed that  under appropriate regularity conditions on the exponent function $\pp$, the original result of Meyers and Serrin extended to this setting.  More precisely, they showed that the Sobolev space  $W^{m,p(\cdot)}(\Omega)$ consisting of $L^{p(\cdot)}(\Omega)$ functions with distributional derivatives in $L^{p(\cdot)}(\Omega)$ is equal to $H^{m,p(\cdot)}(\Omega)$,  the closure of $C^\infty(\Omega)\cap W^{m,p(\cdot)}(\Omega)$ with respect to the
norm
\[ \|f\|_{W^{m,\pp}(\Omega)} = \sum_{|\alpha|\leq m} \|D^\alpha f\|_{L^\pp(\Omega)}. \]
More recently, the second author and Penrod~\cite{MR4777231} extended the results of~\cite{MR3544941} to matrix weighted spaces defined over the variable Lebesgue spaces using matrix $\A_\pp$ weights, a generalization of the matrix $\A_p$ weights to this setting.  As an (implicit) corollary to their work, they extended the results of Kipel\"ainen to this setting, proving that if the exponent function $\pp$ is log-H\"older continuous, and if $w\in \A_\pp$ (the scalar generalization of $A_p$ weights introduced in~\cite{MR2927495}), then $W^{1,\pp}(\Omega,w)= H^{1,\pp}(\Omega,w)$.  (We refer the reader to the above references for definitions and further information.)

The proximate motivation for our work was a series of papers by the first author, Bilalov, Mamedov, and others~\cite{MR4431572, MR4778460, MR4853049, MR4443241, MR4449429, MR4801675}.  They were interested in extending the classical theory of elliptic equations to the more general setting where the data on the right-hand side of the equation is drawn from a Banach function space $X(\Omega)$. As part of their hypotheses, they needed to restrict to functions contained in a subspace that was, essentially, the closure of $C^\infty(\Omega)\cap X(\Omega)$ in the corresponding Sobolev space $W_X^1(\Omega)$.  These papers built upon previous work where $X(\Omega)$ was taken to be a specific kind Banach function space:  e.g., the grand-Lebesgue spaces~\cite{MR4611542}, 
Morrey spaces~\cite{MR4486271}, and rearrangement invariant spaces~\cite{MR4778460}.  Given this work, a natural question is to determine broad conditions such that  ``$H=W$" in a Banach function space.

The goal of this paper is to answer this question and generalize the results discussed above by finding a small collection of ``universal" hypotheses on a Banach function space $X(\Omega)$ so that the first order Sobolev space, $W_X^1(\Omega)$,  is equal to $H^1_X(\Omega)$, the closure of smooth functions in $W_X^1(\Omega)$.  For clarity, we summarize the required hypotheses here; we explain them in detail below in Definitions \ref{defn:fatousr}, \ref{defn:abscont}, and \ref{defn:strong-muckenhoupt}.

\begin{hyp} \label{hyp:main}
    Given a domain $\Omega$ and a Banach function space $X(\Omega)$, we assume that it has the following properties:
    \begin{itemize}
        \item The Fatou property holds;
        \item The norm on $X(\Omega)$ is absolutely continuous;
        \item The strong Muckenhoupt condition holds.
    \end{itemize}
\end{hyp}
\begin{theorem} \label{thm:H=W}
     Given a domain $\Omega$ and a Banach function space $X(\Omega)$ that satisfies Hypothesis~\ref{hyp:main}, $H^1_X(\Omega)=W^1_X(\Omega)$. 
\end{theorem}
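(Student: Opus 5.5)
The plan is to adapt the Meyers--Serrin partition of unity argument, replacing the $L^p$ facts it uses with properties of $X(\Omega)$ drawn from Hypothesis~\ref{hyp:main}. Since $H^1_X(\Omega)\subset W^1_X(\Omega)$ is immediate (the Fatou property makes $W^1_X(\Omega)$ complete and lets distributional derivatives pass to limits), we must show that a fixed $f\in W^1_X(\Omega)$ can be approximated by Lipschitz functions in $W^1_X(\Omega)$.

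\emph{Set-up.} Exhaust $\Omega$ by open sets $\Omega_k=\{x\in\Omega: |x|<k,\ \dist(x,\partial\Omega)>1/k\}$, with $\Omega_0=\Omega_{-1}=\emptyset$. Take a smooth partition of unity $\{\psi_k\}$ subordinate to the cover $U_k=\Omega_{k+1}\setminus\overline{\Omega_{k-1}}$, so that $\sum_k\psi_k=1$ on $\Omega$ and the cover is locally finite. Each $\psi_kf$ lies in $W^1_X(\Omega)$, since $\nabla(\psi_kf)=\psi_k\nabla f+f\nabla\psi_k$, $\psi_k$ and $\nabla\psi_k$ are bounded, and $X$ is a lattice. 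Moreover $\psi_kf$ is supported in the compact set $\overline{U_k}\subset\Omega$.

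\emph{Smoothing.} Given $\epsilon>0$, choose a radius $r_k$ so small that $A_{r_k}(\psi_kf)$ is supported in $U_k$ and
\[
\|A_{r_k}(\psi_kf)-\psi_kf\|_{W^1_X(\Omega)}<\epsilon 2^{-k}.
\]
Here $A_r$ is a mollifier, or the averaging operator over balls or cubes of size $r$; in the latter case the output is Lipschitz, which is why the theorem uses $\mathrm{Lip}(\Omega)$. Since $\nabla A_r g=A_r\nabla g$ for compactly supported $g$, it suffices to know that
\[
A_rg\to g \quad \text{in } X(\Omega) \text{ as } r\to0, \qquad \text{for every } g\in X(\Omega) \text{ with compact support in } \Omega.
\]
Then $F=\sum_kA_{r_k}(\psi_kf)$ is locally a finite sum, hence Lipschitz. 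Its distance from $f=\sum_k\psi_kf$ in $W^1_X(\Omega)$ is at most $\epsilon$, using the triangle inequality together with the Fatou property (or the Riesz--Fischer property) to handle the countable sum.

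\emph{Main obstacle: convergence of averages in $X$.} This is where the strong Muckenhoupt condition and absolute continuity enter.

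First, the strong Muckenhoupt condition should give a uniform bound
\[
\sup_{r>0}\|A_rg\|_X\le C\|g\|_X,
\]
presumably by writing $A_r$ as an average of operators $\sum_Q\langle g\rangle_Q\chi_Q$ over shifted dyadic-type families of cubes of side $r$. The strong Muckenhoupt condition is exactly the uniform bound for such averaging operators over families of disjoint cubes. For a mollifier, one dominates $|\phi_r*g|$ by a sum of a few such averages.

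Second, absolute continuity of the norm implies that bounded functions with compact support are dense in $X$. For bounded $h$ with compact support we have $A_rh\to h$ a.e., and $|A_rh|\le\|h\|_\infty\chi_K$ for a fixed compact set $K$. The dominated convergence theorem in $X$, which is valid by absolute continuity, then gives $A_rh\to h$ in $X$. Alternatively, one first uses the density of continuous compactly supported functions and uniform convergence.

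Third, a standard $\epsilon/3$ argument combining the uniform bound with this density yields $A_rg\to g$ in $X(\Omega)$.

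I expect the delicate point to be matching the paper's precise form of the strong Muckenhoupt condition to the operator actually used, including the requirement that the averages stay inside $\Omega$. This is handled by the support condition on $r_k$.
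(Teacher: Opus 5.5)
Your proposal is correct and follows essentially the paper's route: the Meyers--Serrin partition of unity, reduction to uniform boundedness and convergence of mollification on compactly supported functions in $X(\Omega)$ (Lemma~\ref{lem.conv.CUBE.op.bdd.} and Theorem~\ref{thm.conv.op.bdd.}), and summation of the pieces by the triangle inequality and the Fatou property. One small correction: passing derivatives to the limit in $H^1_X(\Omega)\subset W^1_X(\Omega)$ uses $\chi_Q\in X'(\Omega)$, i.e.\ $X(\Omega)\subset L^1_{loc}(\Omega)$, not the Fatou property.

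The step you mark ``presumably'' is done in the paper without shifted grids, whose average is convolution with a tent kernel; that kernel only dominates $A_r$ up to a constant, and the route also needs Minkowski's integral inequality in $X$. Instead, for $x$ in a tile $Q_k$ of side $\ell(Q)$ the paper uses $|Q|^{-1}\chi_Q*|g|(x)\le 3^n\fint_{3Q_k}|g|$. The cubes $3Q_k$ meeting $\supp(g)$ then split into $3^n$ disjoint families lying in $\Omega$, provided $\ell(Q)$ is small compared with $\dist(\supp(g),\partial\Omega)$. So the uniform bound holds only for small $r$, not for all $r>0$.
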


As an immediate corollary, we have that we can replace the strong Muckenhoupt condition with two related conditions; see Definitions \ref{defn:muckenhouptsr} and \ref{PropertyG}.

\begin{cor} \label{cor:H=W-propG}
    Given a domain $\Omega$, let $X(\Omega)$ be a Banach function space that satisfies Hypothesis~\ref{hyp:main} with the strong Muckenhoupt condition replaced by the Muckenhoupt condition and property $\G$. Then $H^1_X(\Omega)=W^1_X(\Omega)$. 
\end{cor}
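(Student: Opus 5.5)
The statement to prove is Corollary~\ref{cor:H=W-propG}. I would deduce it directly from Theorem~\ref{thm:H=W}. The point is to show that a Banach function space satisfying the Muckenhoupt condition (Definition~\ref{defn:muckenhouptsr}) together with property $\G$ (Definition~\ref{PropertyG}) satisfies the strong Muckenhoupt condition (Definition~\ref{defn:strong-muckenhoupt}). The Fatou property and absolute continuity of the norm are assumed in both settings. Once this implication is in hand, Hypothesis~\ref{hyp:main} holds and Theorem~\ref{thm:H=W} gives $H^1_X(\Omega)=W^1_X(\Omega)$.

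For the implication I take the standard reading of the three conditions, though the exact formulations come later in the paper.
\begin{itemize}
\item The Muckenhoupt condition: $\sup_Q |Q|^{-1}\|\chi_Q\|_X\|\chi_Q\|_{X'}<\infty$ over cubes, uniformly bounding the averaging operator $A_Q f=\big(\avgint_Q f\big)\chi_Q$ on $X$ for a single cube.
\item Property $\G$: for every pairwise disjoint family of cubes $\Q$, $\sum_{Q\in\Q}\|f\chi_Q\|_X\|g\chi_Q\|_{X'}\le C\|f\|_X\|g\|_{X'}$.
\item The strong Muckenhoupt condition: the averaging operators $A_{\Q}f=\sum_{Q\in\Q}\big(\avgint_Q f\big)\chi_Q$ are uniformly bounded on $X$ over all disjoint families $\Q$.
\end{itemize}

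The key step is then a duality computation. Fix $f\in X$ and $g\in X'$ with $\|g\|_{X'}\le 1$. Then
\[
\int |A_\Q f|\,|g| \le \sum_{Q\in\Q}\frac{1}{|Q|}\int_Q|f|\int_Q|g| \le \sum_{Q\in\Q}\frac{\|\chi_Q\|_X\|\chi_Q\|_{X'}}{|Q|}\,\|f\chi_Q\|_X\|g\chi_Q\|_{X'},
\]
where the last step uses H\"older's inequality in $X$ and $X'$ on each cube. The Muckenhoupt condition bounds the prefactor uniformly. Property $\G$ then bounds the remaining sum by $C\|f\|_X\|g\|_{X'}$. Taking the supremum over $g$ and using the Lorentz--Luxemburg/Fatou duality $\|h\|_X=\sup\{\int|hg|:\|g\|_{X'}\le1\}$, which is valid because $X$ has the Fatou property, gives $\|A_\Q f\|_X\le C\|f\|_X$ uniformly in $\Q$.

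The main obstacle is matching the exact forms of the definitions; in particular, the strong Muckenhoupt condition might be phrased for balls or with Lebesgue-measure normalizations restricted to $\Omega$. In that case I would adapt the same H\"older-plus-$\G$ argument to those families, intersecting cubes with $\Omega$ as needed.
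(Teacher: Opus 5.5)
Your proposal is correct and follows the paper's route: the paper deduces the corollary from Theorem~\ref{thm:H=W} via a proposition showing that the Fatou property, the Muckenhoupt condition and property $\G$ give $\|A_\Q f\|_{X(\Omega)}\le 2[X(\Omega)]_{A(\Omega)}[X(\Omega)]_{\G(\Omega)}\|f\|_{X(\Omega)}$, using exactly your duality computation (H\"older twice, then the Muckenhoupt bound, then property $\G$). The only difference is that the paper picks one near-norming $g$ (which costs a factor of $2$ and tacitly assumes $A_\Q f\in X(\Omega)$), whereas you take the exact Lorentz--Luxemburg supremum, which also yields membership $A_\Q f\in X''(\Omega)=X(\Omega)$.
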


As another corollary, we show that if $\Omega=\R^n$, then smooth functions of compact support are dense.

\begin{cor} \label{cor:rn-compact-support}
Suppose the Banach function space $X(\R^n)$ satisfies Hypothesis~\ref{hyp:main}.  Then $C_c^\infty(\R^n)$ is dense in $W^1_X(\R^n)$.
\end{cor}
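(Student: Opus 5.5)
We prove Corollary~\ref{cor:rn-compact-support} in two stages. First, by Theorem~\ref{thm:H=W} applied with $\Omega=\R^n$, it is enough to approximate a function $f\in W^1_X(\R^n)$ that is already Lipschitz (or smooth) by $C_c^\infty$ functions. The approximation has two steps: a cutoff to get compact support, then a mollification to get smoothness.

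\emph{Cutoff step.} Fix $\eta\in C_c^\infty(\R^n)$ with $0\le\eta\le 1$, $\eta=1$ on $B(0,1)$, $\supp\eta\subset B(0,2)$, and set $\eta_R(x)=\eta(x/R)$ and $f_R=\eta_R f$. Then $\nabla f_R=\eta_R\nabla f+f\nabla\eta_R$, so
\[ \|f-f_R\|_{W^1_X}\le \|(1-\eta_R)f\|_X+\|(1-\eta_R)\nabla f\|_X+\frac{\|\nabla\eta\|_\infty}{R}\,\|f\chi_{\{|x|>R\}}\|_X . \]
The last term tends to zero because it is bounded by $C R^{-1}\|f\|_X$, using the lattice property of the norm. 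The first two terms are bounded by $\|f\chi_{\{|x|>R\}}\|_X$ and $\|\nabla f\chi_{\{|x|>R\}}\|_X$. These tend to zero by absolute continuity of the norm, since $\chi_{\{|x|>R\}}\downarrow 0$ pointwise as $R\to\infty$. So $f$ is approximated by compactly supported elements of $W^1_X(\R^n)$.

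\emph{Mollification step.} Let $g=f_R$, which has compact support in $B(0,2R)$, and let $\phi_t$ be a standard mollifier. Then $g*\phi_t\in C_c^\infty$, with support in $B(0,2R+1)$ for $t<1$, and $\nabla(g*\phi_t)=(\nabla g)*\phi_t$. We must show that $h*\phi_t\to h$ in $X$ for $h=g,\partial_j g$. Here we use the paper's convolution/averaging machinery: the strong Muckenhoupt condition gives uniform boundedness of averaging operators, and hence of the convolutions $h\mapsto h*\phi_t$, on $X$. Convergence then follows by the standard density argument. Bounded compactly supported functions (or continuous compactly supported ones) are dense in $X$ by absolute continuity, and for such functions $h*\phi_t\to h$ uniformly with supports in a fixed ball. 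Characteristic functions of bounded sets lie in $X$, so uniform convergence on a fixed ball gives convergence in $X$. An $\epsilon/3$ argument with the uniform operator bound finishes the step.

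Alternatively, if the proof of Theorem~\ref{thm:H=W} already produces approximants of the form $g*\phi_t$ with norm convergence, we can simply invoke that lemma for $g$ of compact support. Convolution of a compactly supported function stays compactly supported, so this yields $C_c^\infty$ approximants directly.

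The main obstacle is the mollification step. It requires the uniform boundedness of mollification on $X$ from the strong Muckenhoupt condition, and the density of nice functions in $X$ from absolute continuity of the norm. I expect to quote both from the lemmas the paper develops for Theorem~\ref{thm:H=W}. Combining the two steps with a diagonal choice of $R$ and $t$ gives the density of $C_c^\infty(\R^n)$ in $W^1_X(\R^n)$.
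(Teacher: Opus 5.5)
Your argument is correct, but it follows a different route from the paper's.

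The paper uses Theorem~\ref{thm:H=W}, or more precisely its proof, which produces approximants in $C^\infty(\R^n)\cap W^1_X(\R^n)$ rather than merely locally Lipschitz ones, to get a smooth $h$ close to $f$. It then only needs the cutoff $h\eta_k$, which already lies in $C_c^\infty(\R^n)$. Convergence comes from Lemma~\ref{lemma:dominated-conv} together with $|\nabla(h\eta_k)|\lesssim|\nabla h|+|h|$.

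You instead run the classical truncate-then-mollify argument. You cut off first, then mollify the compactly supported $g=\eta_R f$, applying Theorem~\ref{thm.conv.op.bdd.} to $g$ and $\partial_j g$ via $\partial_j(g*\phi_t)=(\partial_j g)*\phi_t$. Neither of your two steps uses any regularity of $f$, so your opening reduction through Theorem~\ref{thm:H=W} is superfluous. Your argument proves the corollary directly for every $f\in W^1_X(\R^n)$, bypassing the Meyers--Serrin partition-of-unity construction. It is also insensitive to whether $H^1_X$ is built from Lipschitz or smooth approximants. Conversely, if you really do take $f$ smooth, your mollification step is unnecessary, and that is exactly the paper's shortcut. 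The paper's route is shorter once the main theorem is available; yours is more elementary and self-contained.

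One small inaccuracy appears in your sketch of the density argument. For a merely bounded, compactly supported $h$, the convergence $h*\phi_t\to h$ need not be uniform. You can fix this in either of two ways:
\begin{itemize}
\item restrict to $h\in C_c$, using Lemma~\ref{Tao's Prop.}, which needs $\chi_Q\in X$ as well as absolute continuity; or
\item use a.e.\ convergence with the domination $|h*\phi_t|\le\|h\|_{L^\infty}\chi_B$ for a fixed ball $B$, together with Lemma~\ref{lemma:dominated-conv}.
\end{itemize}
Since you plan to quote Theorem~\ref{thm.conv.op.bdd.} anyway, this does not affect the proof.
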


\begin{remark}
    In Theorem \ref{thm:H=W} we make no assumptions about the boundary of $\Omega$; we note that the same is true in the original work of Meyers and Serrin discussed above.
\end{remark} 

\begin{remark}
    In Theorem~\ref{thm:H=W} and Corollaries~\ref{cor:H=W-propG} and~\ref{cor:rn-compact-support} we have only considered the first order Sobolev space $W^1_X(\Omega)$. We have done so for clarity and conciseness.  However, for any $m\geq 1$ we could define the spaces $W^m_X(\Omega)$, and our arguments could then be readily modified to prove that $W^m_X(\Omega)= H^m_X(\Omega)$.  We leave the details to the interested reader.
\end{remark}

\begin{remark}
    In Theorem~\ref{thm:H=W} we assume that $X(\Omega)$ is a Banach function space.  It is an open question whether it is true in the more general setting of quasi-Banach function spaces (\textit{QBFS}).  As we will show below, a number of the results we need in the proof  hold in this setting.  However, in a key step (see the proof of Theorem~\ref{thm.conv.op.bdd.}) we need to use the triangle inequality (rather than the quasi-triangle inequality of a \textit{QBFS}) which only holds for Banach function spaces. It is not clear to us how to avoid this dependence.
\end{remark}

\begin{remark}
    It is an interesting open question whether the matrix weighted results in~\cite{MR3544941,MR4777231} could be extended to the setting of Banach function spaces, using the ``directional Banach function spaces" introduced by Nieraeth~\cite{MR4966577}.  Similarly, one can ask if the results in~\cite{franchi1997,Garofalo1998,MR2574880} can be extended to more general Banach function spaces.
\end{remark}

\medskip

 The assumption that $X(\Omega)$ has the Fatou property is standard; it is part of the definition of a Banach function space in~\cite{MR928802}.  The assumption that $X(\Omega)$ has absolutely continuous norm means that it is, in some sense, ``far" from $L^\infty$.  This assumption holds, for instance, in the weighted Lebesgue spaces, $1\leq p <\infty$, and the (weighted) variable Lebesgue spaces if the exponent function is uniformly bounded. 

\medskip

The strong Muckenhoupt condition is more restrictive.  A sufficient condition for it to hold is that the Hardy-Littlewood maximal operator is bounded on $X(\Omega)$ and on its K\"othe dual, $X'(\Omega)$.  This is the case, for instance, on the weighted Lebesgue spaces $L^p(w)$ if $1<p<\infty$ and $w\in A_p$; it is also true in the variable Lebesgue spaces $L^\pp(\Omega)$, provided that $\pp$ and its dual exponent $p'(\cdot)$ are uniformly bounded.  In fact, in both examples, the strong Muckenhoupt condition is equivalent to the boundedness of the maximal operator on $X(\Omega)$ and $X'(\Omega)$.  For further examples of spaces where the strong Muckenhoupt condition holds, and for a careful analysis of its relation to other conditions, we refer the reader to Nieraeth~\cite{MR4963357}.
%, where $p'(\cdot)$ is defined pointwise by $\frac{1}{p(x)}+\frac{1}{p'(x)}=1$.
\medskip

The remainder of this paper is organized as follows. In Section~\ref{section:prelim}, we give a precise definition of quasi-Banach function spaces (briefly, \textit{QBFS}) and the hypotheses we impose upon them.  We follow the definition of \textit{QBFS} given by Lorist and Nieraeth~\cite{MR4726599}, which includes the definition of Banach function spaces (\textit{BFS}) due to Bennett and Sharpley~\cite{MR928802}.  We take this approach to better illustrate where our hypotheses are used.  We note that
with our additional hypotheses, when $\Omega=\R^n$ our spaces are essentially the same as the ball quasi-Banach function spaces introduced by Sawano, {\em et al.}~\cite{MR3687096}.   This section also includes some tools like H\"older's inequality and basic results related to characteristic functions and averaging operators.  In section~\ref{section:sobolev} we define the Sobolev spaces  $W^1_X(\Omega)$ and $H^1_X(\Omega)$, and prove that $W^1_X(\Omega)$ is a Banach space, which  gives the inclusion $H^1_X(\Omega)\subset W^1_X(\Omega)$.   In Section~\ref{section:density}, we give several results about dense subsets in a \textit{BFS}.  Some of these are already found in the literature in various forms, but we gather them here with consistent hypotheses.  In Section~\ref{section:convolution} we prove several results about convolutions and approximate identities in a \textit{BFS}.  These are central to our proof of Theorem~\ref{thm:H=W} but are also of interest in their own right.   Finally, in Section~\ref{section:main-proof}, we prove Theorem~\ref{thm:H=W} by proving that $W^1_X(\Omega) \subset H^1_X(\Omega)$, and we prove Corollary~\ref{cor:rn-compact-support}.

Throughout this paper, we will use the following notation. The symbol $n$ is the dimension of the Euclidean space $\R^n$ and we denote the usual Euclidean norm by $|\cdot|$. We will take the domain of our functions to be an open,  connected set $\Omega \subseteq \R^n$. The set $\Omega$ need not, {\em a priori}, be bounded. The letter $C$ will denote a constant that may vary in value from line to line depending on underlying parameters. If we want to specify a particular dependence, we will write, for instance, $C(n)$. We will also use the convention that $1/{\infty} = 0$.

\section{Preliminaries}
\label{section:prelim}

In this section we define Banach function spaces and give some of their basic properties.  Let $L^0(\Omega)$ denote the collection of measurable functions (with respect to Lebesgue measure) on $\Omega$.

\begin{defn}\label{defn:QBFS}
    Given a domain $\Omega$, a quasi-Banach function space on $\Omega$, denoted $X(\Omega)$, is a vector subspace of $L^0(\Omega)$  that is equipped with a norm:  that is, a function $\|\cdot\|_X : X \rightarrow [0,\infty)$ such that 
    \begin{enumerate}
        \item $\|f\|_{X(\Omega)} \geq 0$ for all $f\in X(\Omega)$ and $\|f\|_{X(\Omega)}=0$ if and only if $f=0$ a.e.;
        \item $\|\alpha f\|_{X(\Omega)} =|\alpha|\|f\|_X$ for all $f\in X(\Omega)$ and $\alpha \in \R$;
        \item $\|f+g\|_{X(\Omega)}\leq M\big(\|f\|_{X(\Omega)}+\|g\|_{X(\Omega)}\big)$ for some $M\geq 1$ and for all $f,\,g\in X(\Omega)$. The infimum of all such constants is denoted by $M_X$.  If $M_X=1$, then $X(\Omega)$ is a Banach function space.
    \end{enumerate}
    We further assume that $X(\Omega)$ is complete with respect to convergence in this quasi-norm, and that it satisfies the following two properties:
    \begin{enumerate}
    \setcounter{enumi}{3}
        \item (ideal property)
         if $f\in X(\Omega)$ and $g\in L^0(\Omega)$ with $|g|\leq |f|$, then $g\in X(\Omega)$ and $\|g\|_{X(\Omega)}\leq \|f\|_{X(\Omega)}$;

        \item (saturation property) for every measurable $E\subseteq \Omega$ of positive measure, there exists a measurable $F\subseteq E$ of positive measure with $\chi_F \in X(\Omega)$. 
    \end{enumerate}
\end{defn}

\begin{remark}
    This definition of a quasi-Banach function space is taken from~\cite{MR4726599}.  In the case of a Banach function space, conditions $(1)-(4)$ agree with the definition in~\cite{MR928802}.  The saturation property is weaker than what is assumed in~\cite{MR928802} and we refer the reader to~\cite{MR4726599} for further details.   
\end{remark}

We now define the associate space of a \textit{QBFS}, also referred to as its K\"othe dual.

\begin{defn}
    Given a domain $\Omega$ and a quasi-Banach function space  $X(\Omega)$, the associate space $X'(\Omega)$ consists of all functions $g\in L^0(\Omega)$ such that $fg\in L^1(\Omega)$ for all $f\in X(\Omega)$.   We define
    \[ \|g\|_{X'(\Omega)} = \sup\bigg\{ \int_\Omega |fg|\,dx : f\in X(\Omega), \|f\|_{X(\Omega)}=1 \bigg\}.  \]
\end{defn}

\begin{remark} \label{remark:X'-BFS}
If $X(\Omega)$ is a \textit{BFS}, then so is $X'(\Omega)$:  see~\cite[Theorem~3.2]{MR4726599} and~\cite[Theorem~2.2]{MR928802}. However, this is not true for a general  \textit{QBFS}. It is straightforward to show that $\|\cdot\|_{X'(\Omega)}$ is a norm, and that with it $X'(\Omega)$ is complete and has the ideal property.  However, it may not have the saturation property:  see~\cite{MR4726599} for details.  
\end{remark}

We have a generalization of H\"older's inequality to a QBFS and its associate space.

\begin{lemma}\label{Hölder} 
    Let $X(\Omega)$ be a quasi-Banach function space. If $f\in X(\Omega)$ and $g\in X'(\Omega)$, then $fg$ is integrable and
    \begin{equation*}
        \int_{\Omega}|fg|\,dx\leq \|f\|_{X(\Omega)}\,\|g\|_{X'(\Omega)}.
    \end{equation*}
\end{lemma}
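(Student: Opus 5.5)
The inequality follows directly from the definition of the associate norm together with homogeneity of $\|\cdot\|_{X(\Omega)}$. Fix $f\in X(\Omega)$ and $g\in X'(\Omega)$. By the definition of $X'(\Omega)$, the product $fg$ lies in $L^1(\Omega)$ for every $f\in X(\Omega)$, so integrability is immediate. Only the norm estimate needs an argument.

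We split into two cases according to whether $\|f\|_{X(\Omega)}$ vanishes.

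\emph{Case $\|f\|_{X(\Omega)}=0$.} By property (1) of Definition~\ref{defn:QBFS}, $f=0$ almost everywhere. Hence $\int_\Omega|fg|\,dx=0$ and the inequality holds trivially.

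\emph{Case $\|f\|_{X(\Omega)}>0$.} Set $h=f/\|f\|_{X(\Omega)}$. Since $X(\Omega)$ is a vector space, $h\in X(\Omega)$, and by property (2) of Definition~\ref{defn:QBFS} we have $\|h\|_{X(\Omega)}=1$. So $h$ is admissible in the supremum defining $\|g\|_{X'(\Omega)}$, which gives
\[ \int_\Omega |hg|\,dx \leq \|g\|_{X'(\Omega)}. \]
Multiplying both sides by $\|f\|_{X(\Omega)}$ and using linearity of the integral yields
\[ \int_\Omega |fg|\,dx \leq \|f\|_{X(\Omega)}\,\|g\|_{X'(\Omega)}. \]

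The quasi-triangle constant $M_X$ never enters this argument, so the inequality holds verbatim for any \textit{QBFS}. The only point needing care is the degenerate case $\|f\|_{X(\Omega)}=0$, since there we cannot normalize; it is handled by the definiteness property (1).
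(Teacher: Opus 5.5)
Your argument is correct: integrability is built into the definition of $X'(\Omega)$, and normalizing $f$ by its quasi-norm reduces the estimate to the definition of $\|g\|_{X'(\Omega)}$, with the case $\|f\|_{X(\Omega)}=0$ handled separately. The paper states this lemma without proof, treating it as an immediate consequence of the definitions, so your verification is the intended reasoning.
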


% \begin{remark}
%     As an immediate consequence of the triangle inequality $(3)$ above, we have that given any finite collection $\{f_n\}_{n=1}^N$ in $X(\Omega)$,
%     %
%     \[ \bigg\| \sum_{n=1}^N f_n \bigg\|_{X(\Omega)} \leq \sum_{n=1}^N M_X^n \|f_n\|_{X(\Omega)}. \]
%     %
% \end{remark}

We now define the additional hypotheses we require for Theorem~\ref{thm:H=W}.  There is some redundancy in the statement of Hypothesis~\ref{hyp:main}, which we will explain.  We chose to state it this way for clarity.

\begin{defn}\label{defn:fatousr}
    A quasi-Banach function space $X(\Omega)$ has the Fatou property if, given a sequence $\{f_n\}_{n=1}^\infty$ such that $\sup_n \|f_n\|_{X(\Omega)}<\infty$ and such that the $f_n$ increase pointwise to $f$, then $f\in X(\Omega)$  and 
    \[ \|f_n\|_{X(\Omega)}=\lim_{n\to \infty} \|f_n\|_{X(\Omega)}. \]
\end{defn}

\begin{remark}
    If a vector space equipped with a norm has the Fatou property, then it is complete:  see~\cite[Theorem~1.6]{MR928802} and~\cite[Remark~2.1]{MR4726599}.
\end{remark}

As a consequence of the Fatou property, we have the Lorentz-Luxemburg theorem.  If $X(\Omega)$ is a \textit{BFS} with the Fatou property, then $X'(\Omega)$ is also a \textit{BFS}, and its associate space, $X''(\Omega)$, satisfies $X''(\Omega)= X(\Omega)$ with equality of norms.  (See~\cite[Theorem~71.1]{MR222234} and~\cite[Theorem~2.7]{MR928802}.)  As an immediate consequence of this we get the following estimate.

\begin{lemma} \label{lemma:duality.ineq}
    Let $X(\Omega)$ be a Banach function space with the Fatou property.  Then for any $f \in X(\Omega)$, there exists $g\in X'(\Omega)$ with $\|g\|_{X'(\Omega)}\leq 1$ such that
    \begin{equation*}
        \|f\|_{X(\Omega)}\leq 2 \int_\Omega f(x) g(x) \,dx.
    \end{equation*}
\end{lemma}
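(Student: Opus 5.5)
The plan is to deduce the estimate directly from the Lorentz--Luxemburg theorem, i.e.\ from $X''(\Omega)=X(\Omega)$ with equality of norms, using the definition of the norm on the associate space $X''(\Omega)$. If $f=0$ a.e.\ there is nothing to prove: take $g=0$. So assume $\|f\|_{X(\Omega)}>0$.

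First apply Lorentz--Luxemburg. Since $X(\Omega)$ is a \textit{BFS} with the Fatou property, $X'(\Omega)$ is a \textit{BFS} (Remark~\ref{remark:X'-BFS}). Moreover,
\[ \|f\|_{X(\Omega)}=\|f\|_{X''(\Omega)}=\sup\Big\{\int_\Omega |f h|\,dx : h\in X'(\Omega),\ \|h\|_{X'(\Omega)}=1\Big\}. \]
Because this supremum is strictly positive and finite, there is $h\in X'(\Omega)$ with $\|h\|_{X'(\Omega)}=1$ and
\[ \int_\Omega |fh|\,dx \geq \tfrac12 \|f\|_{X(\Omega)}. \]
The factor $\tfrac12$ is just an arbitrary choice below $1$ that avoids needing the supremum to be attained. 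Lemma~\ref{Hölder} guarantees that $fh$ is integrable, so this integral is finite.

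Next remove the absolute value so that the integrand becomes $fg$. Define $g=\overline{\operatorname{sgn}(f)}\,|h|$, where $\operatorname{sgn} f = f/|f|$ on $\{f\neq 0\}$ and $0$ elsewhere; in the real-valued case this is simply $g=\operatorname{sgn}(f)|h|$. Then $g$ is measurable and $|g|\leq |h|$. By the ideal property of the \textit{BFS} $X'(\Omega)$, $g\in X'(\Omega)$ with $\|g\|_{X'(\Omega)}\leq \|h\|_{X'(\Omega)}=1$. Finally $fg=|f||h|=|fh|$ pointwise, so
\[ \|f\|_{X(\Omega)}\leq 2\int_\Omega |fh|\,dx = 2\int_\Omega f g\,dx. \]

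The only point requiring care is the identification $\|f\|_{X}=\|f\|_{X''}$ through the supremum over the unit sphere of $X'$. Lorentz--Luxemburg supplies this, and it is exactly where the Fatou property is used. Everything else is routine: the choice of a near-extremizer and the sign adjustment via the ideal property. If one prefers to take the supremum over $\|h\|_{X'}\leq 1$ instead of $\|h\|_{X'}=1$, the same argument goes through unchanged.
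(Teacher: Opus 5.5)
Your proposal is correct and follows the paper's route. The paper states this lemma as an immediate consequence of the Lorentz--Luxemburg identity $\|f\|_{X(\Omega)}=\|f\|_{X''(\Omega)}$. You make the implicit steps explicit: you choose a near-extremizer $h$ with $\|h\|_{X'(\Omega)}=1$ and fix the sign of the integrand using the ideal property of $X'(\Omega)$.
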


\begin{defn}\label{defn:abscont}
    A quasi-Banach function space $X(\Omega)$ has an absolutely continuous norm if given any $f\in X(\Omega)$ and any collection of sets $\{E_n\}_{n=1}^\infty$ such that $E_{n+1}\subset E_n$ and $\bigcap_n E_n = \emptyset$, then 
    \[ \lim_{n\to\infty} \|f\chi_{E_n}\|_{X(\Omega)}=0. \]
\end{defn}

The existence of an absolutely continuous norm is equivalent to the norm being order continuous, which in turn is equivalent to the dominated convergence theorem in $X(\Omega)$.  For a proof of the next result, see~\cite[Prop.~3.6]{MR928802} and~\cite[Prop.~3.12]{MR4726599}.  

\begin{lemma} \label{lemma:dominated-conv}
   Let $X(\Omega)$ be a quasi-Banach function space with absolutely continuous norm.  Given any sequence $\{f_n\}_{n=1}^\infty$ in $X(\Omega)$ such that $f_n\to f$ pointwise a.e., and there exists $g\in X(\Omega)$ such that $|f_n|\leq g$, then 
   \[ \lim_{n\to \infty} \|f_n-f\|_{X(\Omega)} = 0. \]
\end{lemma}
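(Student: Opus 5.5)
The statement to prove is Lemma~\ref{lemma:dominated-conv}: dominated convergence in a QBFS with absolutely continuous norm. I will reduce it to the absolute continuity of the single dominating function $g$, using the ideal property and a truncation argument.

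First, the limit is admissible. Since $|f_n|\le g$ and $f_n\to f$ a.e., we have $|f|\le g$ a.e., so $f\in X(\Omega)$ by the ideal property. Moreover $h_n:=|f_n-f|\le 2g$. By the ideal property it therefore suffices to show that $\|h_n\|_{X(\Omega)}\to 0$. Set $G_n=\sup_{k\ge n}|f_k-f|$; then $h_n \le G_n \le 2g$ and $G_n\downarrow 0$ a.e. So it is enough to prove the following: if $0\le G_n\le 2g$ decreases to $0$ a.e., then $\|G_n\|_{X(\Omega)}\to0$.

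To prove this, fix $\epsilon>0$ and split each $G_n$ at height $\epsilon$ relative to $g$. Let
\[
E_n=\{x\in\Omega : G_n(x)>\epsilon g(x)\}.
\]
These sets decrease in $n$, since the $G_n$ decrease. Their intersection has measure zero: at a.e.\ point of $\bigcap_n E_n$ we would need $g>0$ and $G_n>\epsilon g$ for all $n$, which contradicts $G_n\to 0$. After discarding a null set, which does not affect norms, $\bigcap_n E_n=\emptyset$. Now write
\[
G_n = G_n\chi_{E_n} + G_n\chi_{\Omega\setminus E_n} \le 2g\chi_{E_n} + \epsilon g.
\]
The quasi-triangle inequality and the ideal property then give
\[
\|G_n\|_{X(\Omega)} \le M_X\big(2\|g\chi_{E_n}\|_{X(\Omega)} + \epsilon\|g\|_{X(\Omega)}\big).
\]
By Definition~\ref{defn:abscont}, $\|g\chi_{E_n}\|_{X(\Omega)}\to 0$. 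Hence $\limsup_n \|G_n\|_{X(\Omega)}\le M_X\epsilon\|g\|_{X(\Omega)}$, and letting $\epsilon\to0$ finishes the proof.

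The only delicate points are bookkeeping. Absolute continuity is stated for sets with empty intersection, while ours is only null; we handle this by modifying the sets on a null set, or by noting that $\chi_{E_n}$ changes only a.e. The quasi-norm constant $M_X$ enters once, harmlessly, since $\epsilon$ is arbitrary. I expect no real obstacle: the key step is simply choosing the sets $E_n$ relative to $g$, rather than using the sets where $|f_n-f|>\epsilon$, so that the complementary part is controlled by $\epsilon g$, which lies in $X(\Omega)$.
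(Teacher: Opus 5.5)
Your proof is correct. The paper gives no argument of its own for this lemma; it only cites Bennett--Sharpley and Lorist--Nieraeth. So there is nothing to match step by step, and your proof is a self-contained argument in the paper's generality.

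The reduction to a decreasing majorant $G_n=\sup_{k\ge n}|f_k-f|$, with $0\le G_n\le 2g$ and $G_n\downarrow 0$ a.e., is the standard first step. Your second step differs from a common textbook route, which goes as follows:
\begin{itemize}
\item localize to a set $F$ of finite measure with $\|g\chi_{\Omega\setminus F}\|_{X(\Omega)}$ small;
\item apply Egorov's theorem to get uniform convergence off a small set $E$ with $\|g\chi_E\|_{X(\Omega)}$ small;
\item bound the remaining piece by $\sup_{F\setminus E}G_n\cdot\|\chi_F\|_{X(\Omega)}$.
\end{itemize}
The last step needs $\chi_F\in X(\Omega)$. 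That is automatic when characteristic functions of finite-measure sets are assumed to lie in the space. Under the weaker saturation property of Definition~\ref{defn:QBFS}, however, one would first have to build an exhausting sequence of sets whose characteristic functions lie in $X(\Omega)$.

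Your cut $E_n=\{G_n>\epsilon g\}$ is taken relative to $g$ rather than at a fixed height, so the good part is bounded by $\epsilon g$ instead of a multiple of some $\chi_F$. As a result you use only three things: the ideal property, one application of the quasi-triangle inequality, and absolute continuity of the single function $g$. You need no saturation, completeness, Fatou property, Egorov's theorem, or finiteness of any $\|\chi_E\|_{X(\Omega)}$.

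It also yields the sharper function-by-function statement: dominated convergence holds whenever the dominating function $g$ itself has absolutely continuous norm. Finally, removing the null set $\bigcap_n E_n$, as you do, is exactly what is needed to apply Definition~\ref{defn:abscont} as stated.
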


\begin{defn}\label{defn:muckenhouptsr}
     A quasi-Banach function space $X(\Omega)$ satisfies the Muckenhoupt condition on $\Omega$, denoted by $X(\Omega)\in A(\Omega)$, if given any cube $Q\subset \Omega$, $\chi_Q \in X(Q)$ and $\chi_Q \in X'(Q)$, and 
     \[ [X(\Omega)]_{A(\Omega)} =\sup_{Q}|Q|^{-1}\|\chi_{Q}\|_{X(\Omega)}  \|\chi_{Q}\|_{X'(\Omega)} < \infty. \]
\end{defn}

\begin{remark}
    In the case $\Omega=\R^n$, the assumption that $\chi_Q$ is contained in $X(\Omega)$ and $X'(\Omega)$  means that these spaces are ball quasi-Banach function spaces as defined in~\cite{MR3687096}.
\end{remark}

The Muckenhoupt condition is closely related to the Muckenhoupt $A_p$ condition that plays a central role in the study of weighted norm inequalities.  In the general setting of Banach spaces, it was introduced by Berezhnoi~\cite{MR1622773} and plays an important role in the boundedness of many classical operators in a \textit{BFS}.  See~Nieraeth~\cite{MR4963357} for further details and references.  For us, the key role of the Muckenhoupt condition is that it characterizes the boundedness of averaging operators on $X(\Omega)$.   Given a cube $Q\subset \Omega$, define the averaging operator $A_Q$ by 
\begin{equation*}
    A_Qf(x)=\fint_Q f(y)dy\,\chi_Q(x).
\end{equation*}
The following result was proved by Nieraeth~\cite[Prop.~3.1]{MR4963357} for $\Omega=\R^n$, but the same proof holds for general $\Omega$.

\begin{theorem}\label{PropertyofAveragingOp}
     Let $X(\Omega)$ be a quasi-Banach function space.  Given a cube $Q\subset \Omega$,  $A_Q:X(\Omega)\rightarrow X(\Omega)$ if and only if $\chi_Q\in X(\Omega)$, $\chi_Q\in X'(\Omega)$.  Moreover,
    \begin{equation*}
        \|A_Q\|_{X(\Omega)} = |Q|^{-1}\|\chi_{Q}\|_{X(\Omega)}  \|\chi_{Q}\|_{X'(\Omega)}
    \end{equation*}
where $\|A_Q\|_{X(\Omega)}$ denotes the operator norm of $A_Q$.  Consequently,  $X(\Omega)$ satisfies the Muckenhoupt condition if and only if averaging operators are uniformly bounded on $X(\Omega)$ and
\[ \sup_Q  \|A_Q\|_{X(\Omega)} = [X(\Omega)]_{A(\Omega)}.\]
\end{theorem}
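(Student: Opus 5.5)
We prove the two directions of the equivalence separately and compute the operator norm exactly; the final statement about the Muckenhoupt condition then follows by taking the supremum over all cubes $Q\subset\Omega$.

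\emph{Sufficiency and the upper bound.} Assume $\chi_Q\in X(\Omega)$ and $\chi_Q\in X'(\Omega)$. For $f\in X(\Omega)$, Lemma~\ref{Hölder} gives
\[ \Big|\fint_Q f\,dy\Big| \le |Q|^{-1}\int_\Omega |f|\chi_Q\,dy \le |Q|^{-1}\|f\|_{X(\Omega)}\|\chi_Q\|_{X'(\Omega)}. \]
In particular the average is finite. Then $A_Qf$ is a constant multiple of $\chi_Q\in X(\Omega)$, so by homogeneity
\[ \|A_Qf\|_{X(\Omega)}\le |Q|^{-1}\|\chi_Q\|_{X(\Omega)}\|\chi_Q\|_{X'(\Omega)}\|f\|_{X(\Omega)}. \]
This uses only properties (1), (2) and (4) of Definition~\ref{defn:QBFS}, so it holds for a \textit{QBFS}.

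\emph{Necessity and the lower bound.} Assume $A_Q$ maps $X(\Omega)$ into $X(\Omega)$. We must interpret this as: $A_Qf$ is defined, so $f\chi_Q\in L^1$, and $A_Qf\in X(\Omega)$ for every $f\in X(\Omega)$.
\begin{itemize}
\item[(a)] \emph{$\chi_Q\in X'(\Omega)$.} For any $f\in X(\Omega)$, the ideal property gives $|f|\in X(\Omega)$, and $\int|f|\chi_Q<\infty$ by assumption. This is exactly the definition of $\chi_Q\in X'(\Omega)$.
\item[(b)] \emph{$\chi_Q\in X(\Omega)$.} By saturation there is $F\subseteq Q$ with $|F|>0$ and $\chi_F\in X(\Omega)$. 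Then $A_Q\chi_F=\frac{|F|}{|Q|}\chi_Q\in X(\Omega)$, hence $\chi_Q\in X(\Omega)$.
\end{itemize}
Both norms are nonzero, since $|Q|>0$.

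Boundedness of $A_Q$ is needed to make $\|A_Q\|$ finite. If the hypothesis means only that $A_Q$ maps into $X(\Omega)$, boundedness follows from a closed graph argument, or more simply from the formula below once $\chi_Q\in X\cap X'$ is known.

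For the reverse inequality, fix $\epsilon>0$ and choose $f\ge0$ with $\|f\|_{X(\Omega)}=1$ and $\int_\Omega f\chi_Q\ge(1-\epsilon)\|\chi_Q\|_{X'(\Omega)}$. This is possible by the definition of the $X'$ norm, replacing $f$ by $|f|$, which keeps the norm by the ideal property. Then
\[ \|A_Qf\|_{X(\Omega)}=|Q|^{-1}\Big(\int_Q f\Big)\|\chi_Q\|_{X(\Omega)}\ge(1-\epsilon)|Q|^{-1}\|\chi_Q\|_{X(\Omega)}\|\chi_Q\|_{X'(\Omega)}. \]
Letting $\epsilon\to0$ gives equality of norms. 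The last assertion is then immediate by taking the supremum over $Q$ and comparing with Definition~\ref{defn:muckenhouptsr}.

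\emph{Main obstacle.} The only delicate point is interpreting ``$A_Q:X(\Omega)\to X(\Omega)$'' so that (a) follows, namely that the integral defining $A_Qf$ exists for every $f$. Saturation is essential in (b). No triangle inequality is used anywhere, so the result holds for any \textit{QBFS} on a general domain $\Omega$, as claimed.
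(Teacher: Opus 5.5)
Your proof is correct and follows the standard argument. The paper does not prove this result itself but defers to Nieraeth's Proposition~3.1, which likewise uses H\"older's inequality for the upper bound, the saturation property to get $\chi_Q\in X(\Omega)$, and a duality test for the lower bound. The one step worth tightening is that your near-extremizer argument presupposes $\|\chi_Q\|_{X'(\Omega)}<\infty$. In the intended reading where $A_Q$ is bounded, this follows directly from $\int_Q|f|\,dx=|Q|\,\|A_Q(|f|)\|_{X(\Omega)}/\|\chi_Q\|_{X(\Omega)}\le |Q|\,\|A_Q\|\,\|f\|_{X(\Omega)}/\|\chi_Q\|_{X(\Omega)}$, which also gives the lower bound at once.
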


\begin{remark} \label{remark:ball-averages}
    It follows from Theorem~\ref{PropertyofAveragingOp} and the ideal property that averaging operators $A_B$, defined with respect to balls contained in $\Omega$, are uniformly bounded if we restrict to balls that are contained in cubes $Q\subset \Omega$.   Given a point $x\in \Omega$, let $\delta= \dist\{x, \partial\Omega\}$.  Then for any $r<\delta/\sqrt{n}$, $B=B(x,r)$ is contained in the cube $Q=Q(x,r)$ centered at $x$ with sidelength $2r$ and $Q(x,r)\subset \Omega$.  For such balls, we have that $|A_Bf|\leq c(n)A_Q(|f|)$, and so $\|A_B\|_{X(\Omega)} \leq c(n)[X(\Omega)]_{A(\Omega)}$.
    \end{remark}

    One consequence of the Muckenhoupt condition, or more properly, the assumption that $\chi_Q\in X(\Omega)$ for all $Q\in \Omega$, is the following lemma.  A version of this result for $\Omega=\R^n$ was proved in~\cite[Prop.~2.2]{MR4963357}.

  \begin{lemma}\label{charac.func.of.comp.set.bdd.}
    Given a quasi-Banach function space $X(\Omega)$, suppose $\chi_Q\in X(\Omega)$ for every cube $Q\subset \Omega$. If $K\subset \Omega$ is compact, then $\chi_K\in X(\Omega)$.
\end{lemma}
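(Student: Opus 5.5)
Since $K$ is compact and $\Omega$ is open, the plan is to cover $K$ by finitely many cubes contained in $\Omega$ and then control $\chi_K$ pointwise by a finite sum of characteristic functions of cubes. Set $\delta=\dist(K,\partial\Omega)>0$; if $\Omega=\R^n$, take $\delta=1$. For each $x\in K$, let $Q_x$ be the open cube centered at $x$ with sidelength $\delta/\sqrt{n}$. Its closure has half-diagonal $\delta/2<\delta$, so $\overline{Q_x}\subset\Omega$. The cubes $\{Q_x\}_{x\in K}$ form an open cover of $K$, so by compactness there are finitely many points $x_1,\dots,x_N$ with $K\subset\bigcup_{j=1}^N Q_{x_j}$. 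By hypothesis, $\chi_{Q_{x_j}}\in X(\Omega)$ for each $j$. Whether a cube is taken open or closed changes only a null set, and elements of $X(\Omega)$ are defined modulo null sets, so this distinction does not matter.

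Next, $\chi_K\le \sum_{j=1}^N\chi_{Q_{x_j}}=:F$ pointwise. Since $X(\Omega)$ is a vector subspace of $L^0(\Omega)$, $F\in X(\Omega)$. The quasi-triangle inequality, applied inductively, even gives the quantitative bound $\|F\|_{X(\Omega)}\le\sum_j M_X^{j}\|\chi_{Q_{x_j}}\|_{X(\Omega)}<\infty$, although only membership is needed here. Since $\chi_K$ is measurable and $|\chi_K|\le|F|$, the ideal property (4) of Definition~\ref{defn:QBFS} gives $\chi_K\in X(\Omega)$ with $\|\chi_K\|_{X(\Omega)}\le\|F\|_{X(\Omega)}$.

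There is no real obstacle. The only point needing care is choosing the cubes strictly inside $\Omega$, which requires $\dist(K,\partial\Omega)>0$. This holds because $K$ is compact and $\Omega^c$ is closed and disjoint from $K$. The argument uses only the vector space structure and the ideal property, so it works for a general quasi-Banach function space without any Banach or Fatou assumption.
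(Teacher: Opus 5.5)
Your proof is correct and follows the paper's argument: a finite subcover of $K$ by cubes contained in $\Omega$, the pointwise bound $\chi_K\le\sum_j\chi_{Q_{x_j}}$, and the ideal property. Your separate treatment of $\Omega=\R^n$ is a small improvement, since there $\dist(K,\partial\Omega)$ is not a finite positive number, contrary to the paper's assertion that $0<\delta<\infty$.
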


\begin{proof}
    Since $K$ is compact and $\Omega$ is open,  if we let $\delta =\dist(K,\partial \Omega)$, then $0<\delta<\infty$. For $x\in K$, let $Q_x=Q(x,r)$ be the open cube centered at $x$ with $r<\delta/\sqrt{n}$.  Then $Q_x\subset\Omega$. Further, since  $\mathcal{Q}=\{Q_x:x\in K\}$ is an open cover of $K$, there exists a finite subcover $\{Q_{x_j}\}_{j=1}^N$, with centres $x_1,...,x_N \in K$.  Since these cubes overlap we have that
    \[ \chi_K(x)\leq \sum_{j=1}^N\chi_{Q_{x_j}}(x) \]
    for every $x\in \Omega$.  Since each of $\chi_{Q_{x_j}}\in X(\Omega)$ so is their sum and, using the ideal property (see Definition \ref{defn:QBFS}) we see that $\chi_K \in X(\Omega)$.
\end{proof}  

\begin{remark} \label{remark:X'-BFS-2}
    As a consequence of Lemma~\ref{charac.func.of.comp.set.bdd.} we have that if $X(\Omega)$ is a quasi-normed vector space with the ideal property and which satisfies the Muckenhoupt condition, then $X(\Omega)$ satisfies the saturation property as well.  This follows from the fact that every set $F$ of positive measure contains a compact set of positive measure.  
    The property that $\chi_K\in X(\Omega)$ whenever $K$ is compact is still  weaker than the assumption that $\chi_E\in X(\Omega)$ for any set $E$ of finite measure used in~\cite{MR928802}. 
    \end{remark}
    
    \begin{remark}
        Note that if we assume that $X'(\Omega)$ is such that $\chi_Q\in X'(\Omega)$ for every cube $Q\subset \Omega$, then the proof of Lemma~\ref{charac.func.of.comp.set.bdd.} shows that $\chi_K\in X'(\Omega)$, and so $X'(\Omega)$ satisfies the saturation property.  Therefore, by Remark~\ref{remark:X'-BFS}, $X'(\Omega)$ is a Banach function space.
\end{remark}

Given a collection $\Q$ of pairwise disjoint cubes $Q\subset \Omega$, define the averaging operator $A_\Q$ by
\begin{equation*}
    A_\Q f(x)=\sum_{Q\in \Q} \fint_{Q} f(y)dy\,\chi_Q(x).
\end{equation*}

\begin{defn} \label{defn:strong-muckenhoupt}
    A quasi-Banach function space $X(\Omega)$ satisfies the strong Muckenhoupt condition on $\Omega$, denoted by $X(\Omega)\in A^s(\Omega)$, if given any cube $Q\subset \Omega$, $\chi_Q \in X(Q)$ and $\chi_Q \in X'(Q)$, and 
    \[ [X(\Omega)]_{A^s(\Omega)} 
    = \sup_{\Q} \|A_{\Q}\| < \infty, \]
    where the supremum is taken over all collections of disjoint cubes $\Q$ contained in $\Omega$ and $\|A_{\Q}\|$ is the operator norm of $A_\Q$. 
\end{defn}

It is immediate that the strong Muckenhoupt property implies the Muckenhoupt property.  The converse is false:  see~\cite{MR4963357}.  The following property was introduced by Berezhnoi~\cite{MR1622773} to close the gap between the two conditions.

\begin{defn}\label{PropertyG}
    A quasi-Banach function space  $X(\Omega)$ has  property $\G(\Omega)$  if there exists a constant $G$ such that for all $f\in X(\Omega)$, $g\in X'(\Omega)$, and all  collections $\Q$ of pairwise disjoint cubes $Q\subseteq\Omega$,
    \begin{equation*}
        \sum_{Q\in \Q}\big\|\chi_{Q}\,f\big\|_{X(\Omega)}\,\big\|\chi_{Q}\,g\big\|_{X'(\Omega)}\leq G\|f\|_{X(\Omega)}\,\|g\|_{X'(\Omega)}.
    \end{equation*}
    We denote the infimum of all such constants $G$ by $[X(\Omega)]_{\G(\Omega)}$.
\end{defn}

The following result shows that the Muckenhoupt condition and property $\G$ imply the strong Muckenhoupt condition; Corollary~\ref{cor:H=W-propG} then follows immediately from Theorem~\ref{thm:H=W}. It is due originally to Berezhnoi; a stronger result is proved for $\Omega=\R^n$ in~\cite[Theorem~3.9]{MR4963357}; for the convenience of the reader we include the short proof.  

\begin{prop}
    Let $X(\Omega)$ be a Banach function space that satisfies the Fatou property, the Muckenhoupt condition, and  property $\G(\Omega)$. Then for every collection $\Q$ of pairwise disjoint cubes in $\Omega$,
    \begin{equation}
        \|A_\Q f\|_{X(\Omega)}\leq 2[X(\Omega)]_{A(\Omega)}[X(\Omega)]_{\G(\Omega)} \|f\|_{X(\Omega)}. \label{av.op.bdd}
    \end{equation}

\end{prop}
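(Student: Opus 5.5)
The bound will come from duality: Lemma~\ref{lemma:duality.ineq} turns $\|A_\Q f\|_{X(\Omega)}$ into a single integral, which splits cube by cube. On each cube we use the Muckenhoupt condition, and to sum over the cubes we use property $\G(\Omega)$.

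Fix $f\in X(\Omega)$ and a collection $\Q$ of pairwise disjoint cubes in $\Omega$. We may assume $A_\Q f\in X(\Omega)$. If not, we first treat finite subcollections and pass to the limit with the Fatou property, since the partial sums $\sum_{Q\in\Q_N}|\fint_Q f|\chi_Q$ increase pointwise to $|A_\Q f|$. Since $X(\Omega)$ is a Banach function space with the Fatou property, Lemma~\ref{lemma:duality.ineq} gives $g\in X'(\Omega)$ with $\|g\|_{X'(\Omega)}\le 1$ and
\[ \|A_\Q f\|_{X(\Omega)} \le 2\int_\Omega A_\Q f\, g\,dx .\]
Because the cubes are disjoint, the right-hand side equals
\[ 2\sum_{Q\in\Q} \fint_Q f\,dy \int_Q g\,dx ,\]
and its absolute value is bounded by
\[ 2\sum_{Q\in\Q} |Q|^{-1}\int_Q|f|\,dy\int_Q|g|\,dx .\]
(The interchange of sum and integral is justified by monotone convergence applied to absolute values, since $|A_\Q f|\,|g|$ is integrable.)

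On each cube we apply H\"older's inequality (Lemma~\ref{Hölder}) twice:
\[ \int_Q|f|=\int_\Omega |f\chi_Q|\,\chi_Q \le \|f\chi_Q\|_{X(\Omega)}\|\chi_Q\|_{X'(\Omega)}, \qquad \int_Q |g|\le \|\chi_Q\|_{X(\Omega)}\|g\chi_Q\|_{X'(\Omega)}.\]
Here the second inequality uses $X''=X$ with equal norms, which is the Lorentz--Luxemburg theorem. The term on $Q$ is then at most
\[ |Q|^{-1}\|\chi_Q\|_{X(\Omega)}\|\chi_Q\|_{X'(\Omega)}\,\|f\chi_Q\|_{X(\Omega)}\|g\chi_Q\|_{X'(\Omega)} \le [X(\Omega)]_{A(\Omega)}\|f\chi_Q\|_{X(\Omega)}\|g\chi_Q\|_{X'(\Omega)}. \]
Summing over $Q\in\Q$ and applying property $\G(\Omega)$ gives
\[ \|A_\Q f\|_{X(\Omega)}\le 2[X(\Omega)]_{A(\Omega)}[X(\Omega)]_{\G(\Omega)}\|f\|_{X(\Omega)}\|g\|_{X'(\Omega)}, \]
and since $\|g\|_{X'(\Omega)}\le1$ this is \eqref{av.op.bdd}.

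The main obstacle is the a priori membership $A_\Q f\in X(\Omega)$ needed to invoke Lemma~\ref{lemma:duality.ineq}. It is handled by the finite-subcollection reduction above. For a finite subcollection, membership follows from Theorem~\ref{PropertyofAveragingOp}, because then $A_\Q f$ is a finite sum of the functions $A_Qf\in X(\Omega)$. The resulting bound is uniform in $N$, so the Fatou property applied to the increasing sequence $|A_{\Q_N}f|$ yields both $A_\Q f\in X(\Omega)$ and the bound in the limit.
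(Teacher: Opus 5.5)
Your proposal is correct and follows the paper's proof step for step: the duality estimate from Lemma~\ref{lemma:duality.ineq}, H\"older's inequality twice on each cube, then the Muckenhoupt condition and property $\G(\Omega)$. The second H\"older step is simply H\"older's inequality for $\chi_Q\in X(\Omega)$ and $g\chi_Q\in X'(\Omega)$, so the Lorentz--Luxemburg theorem is not actually needed there; and your reduction to finite subcollections followed by the Fatou property is a genuine gain in rigor, since the paper applies Lemma~\ref{lemma:duality.ineq} to $A_\Q f$ without first checking that $A_\Q f\in X(\Omega)$.
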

\begin{proof}
    Fix a collection $\Q$ of pairwise disjoint cubes contained in $\Omega$, and fix $f\in X(\Omega)$.  By Lemma~\ref{lemma:duality.ineq} , there exists a function $g\in X'(\Omega)$ with $\|g\|_{X'(\Omega)}\leq 1$ such that 
    \begin{align*}
        \|A_\Q f\|_{X(\Omega)} &\leq 2\int_\Omega A_\Q f(x)\,g(x)\,dx\\
        &\leq 2 \sum_{Q\in \Q}\int_Q \fint_Q |f(y)|\,|g(x)|\,dx.
\intertext{Then by Hölder's inequality (Lemma \ref{Hölder}) applied twice, the Muckenhoupt condition, and property $\G(\Omega)$, we have that}
        & \leq 2\sum_{Q\in \Q} \int_Q |Q|^{-1} \big(\|\chi_Q\|_{X'(\Omega)}\|f\chi_Q\|_{X(\Omega)}\big)\,\chi_Q(x)g(x)\,dx \\
        &\leq 2\sum_{Q\in \Q}|Q|^{-1}\,\|\chi_Q\|_{X'(\Omega)}\,\|f\chi_Q\|_{X(\Omega)}\,\|\chi_Q\|_{X(\Omega)}\|g\, \chi_Q\|_{X'(\Omega)}\\
        & \leq 2[X(\Omega)]_{A(\Omega)}\sum_{Q\in \Q}\,\|f\chi_Q\|_{X(\Omega)}\,\|g\, \chi_Q\|_{X'(\Omega)}\\
        &\leq 2[X(\Omega)]_{A(\Omega)}[X(\Omega)]_{\G(\Omega)}\|f\|_{X(\Omega)}\,\|g\|_{X'(\Omega)}\\
        &\leq 2[X(\Omega)]_{A(\Omega)}[X(\Omega)]_{\G(\Omega)}\|f\|_{X(\Omega)}.
    \end{align*}
\end{proof}

\section{Sobolev spaces defined over $X(\Omega)$}
\label{section:sobolev}

In this section we define the Sobolev spaces $W_X^1(\Omega)$ and $H^1_X(\Omega)$, and prove that $W_X^1(\Omega)$ is a Banach space. Throughout this section we fix a domain $\Omega$ and a quasi-Banach function space $X(\Omega)$.  We begin by recalling some basic definitions from the classical theory of Sobolev spaces.  For more information, see Gilbarg and Trudinger~\cite{MR737190} or Ziemer~\cite{MR1014685}.  A function $f\in L^1_{loc}(\Omega)$ is weakly differentiable, or has derivatives in the distributional sense, with respect to $x_j$ if there exists a function $g_j \in L^1_{loc}(\Omega)$ such that for every $\phi\in C^\infty_c(\Omega)$,
\begin{equation*}
\int_\Omega g_j(x)\phi(x)\,dx = -\int_\Omega f(x)\partial_j\phi(x)\,dx.    
\end{equation*}
We denote this function $g_j$ by $\partial_j f$. Let $\nabla f=(\partial_1f,\dots,\partial_nf)$.  Define $W^{1,1}_{loc}(\Omega)$ to be the collection of functions $f\in L^1_{loc}(\Omega)$ such that $\partial_j f$ exists for $j=1,\dots,n$ and is in $L^1_{loc}(\Omega)$.  We will often write $\vecf=(f,\grad f)\in W^{1,1}_{loc}(\Omega)$.

\begin{defn}
    Given a quasi-Banach space $X(\Omega)$,  define $W^1_X(\Omega)$ to be the collection of all functions $\vecf=(f,\nabla f)\in W^{1,1}_{loc}(\Omega)$ such that
\begin{equation*}
    \|\vecf\|_{W^1_X(\Omega)}=\|f\|_{X(\Omega)}+\|\nabla f\|_{X(\Omega)} < \infty. 
\end{equation*} 
\end{defn}

\begin{prop} \label{prop:$W^1_X$ is Banach}
   Given a quasi-Banach space $X(\Omega)$,  suppose $\chi_Q \in X'(\Omega)$ for all cubes $Q\subset \Omega$.  Then $W^1_X(\Omega)$ is a quasi-Banach space.
\end{prop}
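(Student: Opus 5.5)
We must show that $\|\cdot\|_{W^1_X(\Omega)}$ is a quasi-norm and that $W^1_X(\Omega)$ is complete. The quasi-norm properties are inherited from $X(\Omega)$. For the gradient we interpret $\|\nabla f\|_{X(\Omega)}$ as $\||\nabla f|\|_{X(\Omega)}$; the quasi-triangle inequality then follows from $|\nabla(f+g)|\le|\nabla f|+|\nabla g|$, the ideal property, and the quasi-triangle inequality in $X(\Omega)$, with the same constant $M_X$. Definiteness follows from that of $\|f\|_{X(\Omega)}$.

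For completeness, take a Cauchy sequence $\{f_k\}$ in $W^1_X(\Omega)$. Since $|\partial_j f|\le |\nabla f|$, the ideal property shows that $\{f_k\}$ and each $\{\partial_j f_k\}$, $1\le j\le n$, are Cauchy in $X(\Omega)$. By completeness of $X(\Omega)$ there exist $f, g_1,\dots,g_n\in X(\Omega)$ with $f_k\to f$ and $\partial_j f_k\to g_j$ in $X(\Omega)$. It remains to show that $f\in W^{1,1}_{loc}(\Omega)$ with $\partial_j f=g_j$; then $\|f_k-f\|_{W^1_X(\Omega)}\le \|f_k-f\|_{X(\Omega)}+\||\nabla f_k-(g_1,\dots,g_n)|\|_{X(\Omega)}\to 0$. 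For the gradient term we use $|v|\le\sum_j|v_j|$ together with the quasi-triangle inequality, applied to finitely many terms.

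The key step, and the place where the hypothesis $\chi_Q\in X'(\Omega)$ enters, is passing from convergence in $X(\Omega)$ to convergence in $L^1_{loc}(\Omega)$. Given a compact $K\subset\Omega$, the proof of Lemma~\ref{charac.func.of.comp.set.bdd.} (applied to $X'$, as noted in the remark following Remark~\ref{remark:X'-BFS-2}) together with the ideal property of $X'(\Omega)$ gives $\chi_K\in X'(\Omega)$. By H\"older's inequality (Lemma~\ref{Hölder}),
\[ \int_K|h|\,dx\le \|h\|_{X(\Omega)}\|\chi_K\|_{X'(\Omega)} \]
for every $h\in X(\Omega)$. Hence $f_k\to f$ and $\partial_jf_k\to g_j$ in $L^1(K)$ for every compact $K$. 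In particular $f,g_j\in L^1_{loc}(\Omega)$.

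Finally, fix $\phi\in C^\infty_c(\Omega)$ and let $K=\supp\phi$. We have
\[ \Big|\int_\Omega (f_k-f)\partial_j\phi\,dx\Big|\le \|\partial_j\phi\|_\infty\int_K|f_k-f|\,dx\to0, \]
and similarly $\int_\Omega \partial_jf_k\,\phi\,dx\to\int_\Omega g_j\phi\,dx$. Passing to the limit in $\int_\Omega \partial_j f_k\,\phi\,dx=-\int_\Omega f_k\,\partial_j\phi\,dx$ gives $\partial_jf=g_j$ weakly, which completes the argument. The only point requiring care is the $L^1_{loc}$ embedding; the rest is routine.
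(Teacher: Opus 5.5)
Your proposal is correct and follows essentially the same route as the paper. You extract limits $f,g_1,\dots,g_n$ in $X(\Omega)$ by completeness, then use H\"older's inequality with $\chi_K\in X'(\Omega)$ for $K=\supp\phi$ to pass to the limit in the weak-derivative identity; the paper phrases this as showing $W^1_X(\Omega)$ is a closed subspace of $\bigoplus_{j=0}^n X(\Omega)$, and leaves implicit the details you add (the quasi-norm axioms, the reading $\|\nabla f\|_{X(\Omega)}=\||\nabla f|\|_{X(\Omega)}$, and local integrability of the limits).
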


\begin{proof}
     Our proof is adapted from the proof of the second author and Penrod~\cite[Theorem~6.2]{MR4777231} and the second and third authors and Moen~\cite[Theorem~5.2]{MR3544941} in the setting of matrix weights.
    Since we can map each $\vecf \in W^1_X(\Omega)$ to a unique $(n+1)$-tuple $(f,\partial_1 f,\ldots,\partial_n f)$, we may consider $W^1_X(\Omega)$ as a linear subspace of $\bigoplus_{j=0}^{n}X(\Omega)$. Since $X(\Omega)$ is a quasi-Banach space, so is $\bigoplus_{j=0}^{n}X(\Omega)$. Therefore, to complete the proof it will suffice to show $W^1_X(\Omega)$ is a closed subspace of $\bigoplus_{j=0}^{n}X(\Omega)$. 
    
    Fix a Cauchy sequence $\{\vecf_k\}_{k=1}^\infty$ in $W^1_X(\Omega)$, $\vecf_k=(f_k,\partial_1 f_k,\ldots, \partial_n f_k)$. Since $X(\Omega)$ is complete, there exist $g, g_1,\dots,g_n \in X(\Omega)$ such that $f_k \rightarrow g$ and $\partial_jf_k \rightarrow g_j$ as $k\rightarrow\infty$ in $X(\Omega)$. For each $j$ we will show that $\partial_jg=g_j$, i.e., for all $\phi \in C^\infty_0(\Omega)$,
    \begin{equation}
        \int_\Omega g_j(x)\phi(x)\,dx=-\int_\Omega g(x)\partial_j\phi(x)\,dx. \label{weak der.}
    \end{equation}
    For in this case we have that $\vecf_k \to \vecg=(g,\partial_1 g,\ldots,\partial_n g) \in W^1_X(\Omega)$ and so $W^1_X(\Omega)$ is a closed subspace.
    
    Fix $\phi \in C^\infty_0(\Omega)$ and let $K=\supp(\phi)$. We first show both integrals are finite. For the first one, by Hölder's inequality (Lemma~\ref{Hölder}),
    \begin{equation*}
        \int_\Omega |g_j(x)\phi(x)|\,dx \leq \|\phi\|_{L^\infty(\Omega)}\int_\Omega|\chi_K \, g_j(x)|\,dx 
         \leq \|\phi\|_{L^\infty(\Omega)} \, \|\chi_K\|_{X^{'}(\Omega)} \, \|g_j\|_{X(\Omega)} < \infty.
    \end{equation*}
In the last inequality we use the fact that $\chi_Q\in X'(\Omega)$ for all cubes, which, by Lemma \ref{charac.func.of.comp.set.bdd.} and Remark~\ref{remark:X'-BFS-2}, implies that since $K\subset \Omega$ is compact, $\|\chi_{K}\|_{X{'}(\Omega)}<\infty$.
Essentially the same argument shows that the second integral in \eqref{weak der.} is finite since $\|\grad \phi\|_{L^\infty(\Omega)}<\infty$.

We now show that \eqref{weak der.} holds. Since $\vecf_k \in W^1_X(\Omega)$, for all $\phi \in C^\infty_0(\Omega)$,
\begin{equation*}
    \int_\Omega \partial_j f_k(x) \phi(x)\,dx=-\int_\Omega f_k(x)\partial_j\phi(x)\,dx. 
\end{equation*}
Therefore, for every $k$ we can estimate as follows:
\begin{align*}
    0 &\leq \bigg| \int_\Omega (g_j\phi+ g\partial_j\phi)\,dx \bigg| \\
    & = \bigg| \int_\Omega \big(g_j\phi - \partial_j f_k \phi - f_k\partial_j\phi+ g\,\partial_j\phi\big)\,dx \bigg| \\
    & \leq \bigg| \int_\Omega (g_j-\partial_j f_k) \phi \, dx \bigg| + \bigg| \int_\Omega (g - f_k) \partial_j\phi \, dx \bigg |\\
    &\leq \|\phi\|_{L^\infty(\Omega)}\, \|\chi_K\|_{X^{'}(\Omega)} \, \|g_j-\partial_j f_k\|_{X(\Omega)}\\
    & \quad \quad +\|\partial_j\phi\|_{L^\infty(\Omega)}\, \|\chi_K\|_{X^{'}(\Omega)}\, \|g-f_k\|_{X(\Omega)}.
\end{align*}
Both norms in the last line go to zero as $k\rightarrow\infty$, and so we must have that~\eqref{weak der.} holds.  This completes the proof.
\end{proof}

We now define $H^1_X(\Omega)$.  Let $\lip(\Omega)$ consist of all locally Lipschitz functions on $\Omega$.  Recall that by Rademacher's theorem, every function in $\lip(\Omega)$ has a locally bounded derivative; see~\cite[Section~3.1]{MR3409135}. 

\begin{defn}
    Given a quasi-Banach space $X(\Omega)$  define the space $H^1_X(\Omega)$ to be the formal closure of $\lip(\Omega)\cap W^1_X(\Omega)$  with respect to the  $W^1_X(\Omega)$ norm.
\end{defn}

\begin{remark}
    For an arbitrary QBFS $X(\Omega)$ we do not know if $\lip(\Omega)\cap W^1_X(\Omega)$ is non-empty.  Such a space seems quite pathological, and we will not concern ourselves with this possibility.
\end{remark}

By ``formal closure" we mean the collection of all equivalence classes of Cauchy sequences $\{\vecf_k\}_{k=1}^\infty$,  $\vecf_k=(f_k,\grad f_k)$.  Two sequences $\{\vecf_k\}_{k=1}^\infty$ and $\{\vecg_k\}_{k=1}^\infty$ are equivalent if $\lim_{k\rightarrow\infty}\|\vecf -\vecg\|_{W^1_X(\Omega)} = 0$.  Since $X(\Omega)$ is complete, the sequences $\{f_k\}_{k=1}^\infty$ and $\{\grad f_k\}_{k=1}^\infty$ converge in $X(\Omega)$ and so $\{\vecf_k\}_{k=1}^\infty$ converges to a pair $\vecf=(f,\vecg)\in X(\Omega)\bigoplus X(\Omega)$.  Since this pair is unique to the equivalence class containing $\{\vecf_k\}_{k=1}^\infty$, we can identify each equivalence class with such a pair.  However, {\em a priori}, we do not know that $\grad f=\vecg$ even in the sense of weak derivatives.  This phenomenon occurs in the study of  Sobolev spaces that are the solution spaces of degenerate elliptic equations.  See~\cite{tubitak1} for further information and references.  
But, with an additional assumption, we can avoid this problem and prove the following result.

\begin{prop}
     Given a quasi-Banach space $X(\Omega)$,  suppose $\chi_Q \in X'(\Omega)$ for all cubes $Q\subset \Omega$.  Then $H^1_X(\Omega)$ is non-empty and is contained in $W^1_X(\Omega)$.
\end{prop}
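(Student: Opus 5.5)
The plan has two parts: show that $H^1_X(\Omega)$ is non-empty, and show that every element of the formal closure can be identified with a genuine element of $W^1_X(\Omega)$.

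\emph{Non-emptiness.} The zero function is locally Lipschitz and trivially lies in $W^1_X(\Omega)$. Hence $\lip(\Omega)\cap W^1_X(\Omega)\neq\emptyset$, and its formal closure contains at least the class of the constant sequence $0$. If a nontrivial element is wanted, take $\phi\in C_c^\infty(\Omega)$ with $K=\supp\phi$. Then $|\phi|\le \|\phi\|_{L^\infty}\chi_K$ and $|\partial_j\phi|\le \|\partial_j\phi\|_{L^\infty}\chi_K$, so $\phi\in W^1_X(\Omega)$ by the ideal property. This requires $\chi_K\in X(\Omega)$, which would follow from Lemma~\ref{charac.func.of.comp.set.bdd.} if $\chi_Q\in X(\Omega)$ for all cubes. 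Since the stated hypothesis is only on $X'$, I will rely on the zero function for non-emptiness.

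\emph{Inclusion.} Let an element of $H^1_X(\Omega)$ be represented by a Cauchy sequence $\vecf_k=(f_k,\grad f_k)$ with $f_k\in\lip(\Omega)\cap W^1_X(\Omega)$. Since $X(\Omega)$ is complete, $f_k\to f$ and $\partial_jf_k\to g_j$ in $X(\Omega)$. By Rademacher's theorem each $f_k$ is locally Lipschitz, so its a.e.\ gradient is locally bounded and coincides with its weak gradient. Hence each $\vecf_k$ is a genuine element of $W^{1,1}_{loc}(\Omega)$ satisfying
\[ \int_\Omega \partial_j f_k\,\phi\,dx=-\int_\Omega f_k\,\partial_j\phi\,dx \qquad\text{for all } \phi\in C^\infty_c(\Omega). \]

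Now I repeat the argument from the proof of Proposition~\ref{prop:$W^1_X$ is Banach}. Fix $\phi$ with compact support $K$. Lemma~\ref{charac.func.of.comp.set.bdd.}, applied to $X'(\Omega)$ as in the subsequent remark, gives $\chi_K\in X'(\Omega)$. Together with H\"older's inequality (Lemma~\ref{Hölder}) this shows $f,g_j\in L^1_{loc}(\Omega)$. It also gives
\[ \Big|\int_\Omega (g_j\phi+f\partial_j\phi)\,dx\Big|\le \|\chi_K\|_{X'(\Omega)}\Big(\|\phi\|_\infty\|g_j-\partial_jf_k\|_{X(\Omega)}+\|\partial_j\phi\|_\infty\|f-f_k\|_{X(\Omega)}\Big), \]
and the right-hand side tends to $0$ as $k\to\infty$. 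Therefore $g_j=\partial_j f$ weakly, so $\vecg=\grad f$ and $\vecf=(f,\grad f)\in W^{1,1}_{loc}(\Omega)$ with finite $W^1_X$ norm, that is, $\vecf\in W^1_X(\Omega)$.

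\emph{Well-definedness.} Equivalent Cauchy sequences have the same limit pair, so the identification of each class with $\vecf$ does not depend on the representative. The class maps injectively to $\vecf$, giving $H^1_X(\Omega)\subset W^1_X(\Omega)$.

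The main point needing care is the identification of the Lipschitz approximants' a.e.\ gradients with their weak gradients. This is standard for locally Lipschitz functions, via Rademacher's theorem and absolute continuity on lines.
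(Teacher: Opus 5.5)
Your proof is correct. The inclusion $H^1_X(\Omega)\subset W^1_X(\Omega)$ is the paper's argument. The paper cites the completeness of $W^1_X(\Omega)$ proved just before this proposition, and you rerun that weak-derivative limit directly on the Lipschitz approximants. Your Rademacher remark is harmless but unnecessary, since elements of $\lip(\Omega)\cap W^1_X(\Omega)$ already have weak gradients in $X(\Omega)$ by definition.

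Where you differ is non-emptiness. The paper asserts that the hypothesis on $X'(\Omega)$ gives $\lip_0(\Omega)\subset W^1_X(\Omega)$, and hence $\lip_0(\Omega)\subset H^1_X(\Omega)$. Your objection to this is right. The H\"older argument with $\chi_K\in X'(\Omega)$ only shows that elements of $X(\Omega)$ are locally integrable. Putting a compactly supported Lipschitz function into $X(\Omega)$ needs $\chi_K\in X(\Omega)$, that is, $\chi_Q\in X(\Omega)$ for cubes, and this is not implied. For $X=L^1(\R^n,|x|^{-n}dx)$ one has $\|\chi_Q\|_{X'}=\sup_{x\in Q}|x|^n<\infty$ for every cube. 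Yet no Lipschitz function with $\phi(0)\neq 0$ lies in $X$. Replacing $|x|^{-n}$ by $\sum_k 2^{-k}|x-q_k|^{-n}$ with $\{q_k\}$ dense gives a weight that is finite a.e.\ by Borel--Cantelli, bounded below on cubes, and non-integrable on every open set. This even forces $\lip(\R^n)\cap W^1_X(\R^n)=\{0\}$.

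So under the stated hypothesis, your zero-element argument is exactly what can be proved. The paper's stronger conclusion $\lip_0(\Omega)\subset H^1_X(\Omega)$ needs the extra assumption $\chi_Q\in X(\Omega)$, which is available under Hypothesis~\ref{hyp:main}.
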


\begin{proof}
Note first that, given our hypothesis on $X'(\Omega)$, the argument to show that both integrals in~\eqref{weak der.} are finite also proves that $\lip_0(\Omega)$, the collection of all Lipschitz functions whose support is a compact subset of $\Omega$, is contained in $W^1_X(\Omega)$.  Using constant sequences, we also see that $\textrm{Lip}_0(\Omega)\subset H^1_X(\Omega)$.  Further, by Proposition~\ref{prop:$W^1_X$ is Banach}, $W_X^1(\Omega)$ is a quasi-Banach space, and so we immediately have that $\textrm{Lip}_0(\Omega)\subset H^1_X(\Omega)\subset W^1_X(\Omega)$. 
\end{proof}

\section{Density in quasi-Banach function spaces}
\label{section:density}

In this section we prove, with increasingly stricter hypotheses, that bounded functions of compact support, continuous functions of compact support, and smooth functions of compact support are  dense in a QBFS $X(\Omega)$.  To prove Theorem~\ref{thm:H=W} we only need the density of continuous functions.  We include the other two results because they are of independent interest and have relatively short proofs.

The proof of the density of bounded functions of compact support is adapted from that in the case of variable Lebesgue spaces in~\cite[Theorem~2.72]{MR3026953}.

\begin{lemma} \label{lemma:bounded-compact-dense}
    Let $X(\Omega)$ be a quasi-Banach function space with absolutely continuous norm.   Then the set of bounded functions with compact support, $L_c^{\infty}(\Omega)$, is dense in $X(\Omega)$. 
\end{lemma}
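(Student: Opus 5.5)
Fix $f\in X(\Omega)$; I will show $\|f-f_k\|_{X(\Omega)}\to 0$ for a sequence of truncations $f_k$. First choose an exhaustion of $\Omega$ by compact sets:
\[ K_k=\{x\in\Omega : |x|\leq k,\ \dist(x,\partial\Omega)\geq 1/k\} \]
(with $K_k=\overline{B(0,k)}$ if $\Omega=\R^n$). These are compact, increase, and $\bigcup_k K_k=\Omega$. Then define
\[ f_k = f\,\chi_{K_k}\,\chi_{\{|f|\leq k\}}. \]
Each $f_k$ is bounded by $k$, is supported in $K_k$, and is measurable, so $f_k\in L^\infty_c(\Omega)$. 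By the ideal property, $f_k\in X(\Omega)$.

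To show convergence I will invoke the dominated convergence theorem in $X(\Omega)$, Lemma~\ref{lemma:dominated-conv}, which requires an absolutely continuous norm. The key step is pointwise convergence. Since $f\in X(\Omega)\subset L^0(\Omega)$, $f$ is finite a.e. For a.e.\ $x\in\Omega$ we have $x\in K_k$ and $|f(x)|\leq k$ for all $k$ sufficiently large, so $f_k(x)=f(x)$ eventually and $f_k\to f$ a.e. The domination is immediate: $|f_k|\leq|f|$ with $f\in X(\Omega)$. Lemma~\ref{lemma:dominated-conv} then gives $\|f_k-f\|_{X(\Omega)}\to0$.

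Alternatively, one can argue directly from Definition~\ref{defn:abscont}. Since $|f-f_k|=|f|\chi_{E_k}$ with $E_k=\Omega\setminus(K_k\cap\{|f|\leq k\})$, the sets $E_k$ decrease, and their intersection is contained in $\{|f|=\infty\}$. That set is null, so we may discard it (or replace $E_k$ by $E_k\setminus\{|f|=\infty\}$, which changes nothing a.e.). Absolute continuity then gives the result.

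The only delicate point is the claim that $f$ is finite almost everywhere, so that the sets $E_k$ genuinely shrink to a null set. This follows because elements of $L^0(\Omega)$ are taken to be a.e.\ finite measurable functions. Beyond that, the argument is routine and works in the quasi-Banach setting, since neither step uses the triangle inequality.
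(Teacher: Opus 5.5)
Your proof is correct and takes the same approach as the paper: restrict to a compact exhaustion $K_k$, truncate at level $k$, and apply the dominated convergence theorem (Lemma~\ref{lemma:dominated-conv}) with dominating function $|f|$. The only difference is that you set $f_k=0$ where $|f|>k$ instead of clipping $f$ at $\pm k$ as the paper does; this is immaterial, since both truncations are dominated by $|f|$ and converge to $f$ a.e.
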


\begin{proof}
    Let $\{K_k\}_{k=1}^\infty$ be a nested sequence of compact subsets of $\Omega$ such that $\Omega=\bigcup_j{K_k}$.  For instance, let  $K_k=\{x \in \Omega : \dist(x,\Omega)\geq1/k\}\cap\overline{B(0,k)}$. Now fix any $f\in X(\Omega)$ and define the sequence $\{f_k\}_{k=1}^\infty$ by
    \begin{displaymath}
        {f_k(x)}=  \left\{\begin{array}{cc}
            k\,,&  f_k(x)>k, \\
            f(x)\,, & -k\leq f(x)\leq k, \\
            -k\,, & f_k(x)<-k.
        \end{array} \right.
    \end{displaymath}  
Let $g_k(x)=f_k(x)\,\chi_{K_k}(x)$. Then $g_k \in L_c^\infty(\Omega)$.  Further, $g_k\rightarrow f$ pointwise almost everywhere. Since $f\in X(\Omega)$ and $|g_k(x)|\leq |f(x)|$, by the ideal property we have that $g_k\in X(\Omega)$. Therefore, by Lemma~\ref{lemma:dominated-conv}, $g_k\to f$ in $X(\Omega)$.  
\end{proof}

We next show that continuous functions of compact support are dense.  The following result was proved by Tao, {\em et al.}~\cite[Prop.~3.8]{MR4568877} for ball quasi-Banach function spaces when $\Omega=\R^n$.  Their proof in turn is a modification of the classical proof that $C_c(\R^n)$ is dense in $L^1(\R^n)$.  To be clear on where our hypotheses are used, we give the short proof.

\begin{lemma}\label{Tao's Prop.}
    Let $X(\Omega)$ be a quasi-Banach function space with absolutely continuous norm and such that $\chi_Q\in X(\Omega)$ for all cubes $Q\subset \Omega$.  Then $C_c(\R^n)$ is dense in $X(\Omega)$.
\end{lemma}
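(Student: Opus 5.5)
By Lemma~\ref{lemma:bounded-compact-dense} it suffices to approximate a fixed $f\in L^\infty_c(\Omega)$ (interpreting $C_c(\R^n)$ as restrictions to $\Omega$ of continuous functions whose support is a compact subset of $\Omega$). Let $K=\supp f$, $M=\|f\|_{L^\infty}$, and choose a compact set $K'$ with $K\subset \operatorname{int}K'\subset K'\subset\Omega$, for instance a closed neighborhood of $K$ of radius $\dist(K,\partial\Omega)/2$. Lemma~\ref{charac.func.of.comp.set.bdd.} gives $\chi_{K'}\in X(\Omega)$; this is the only place the cube hypothesis is used.

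The construction follows the classical $L^1$ argument combined with Lusin's theorem. For each $k$, Lusin's theorem together with the Tietze extension theorem produces $g_k\in C_c(\R^n)$ and a measurable set $E_k$ with the following properties:
\begin{itemize}
\item $\supp g_k\subset \operatorname{int}K'$ (multiply by a continuous cutoff that equals $1$ on $K$ and is supported in $\operatorname{int}K'$);
\item $|g_k|\le M$;
\item $g_k=f$ off $E_k$, where $E_k\subset K'$ and $|E_k|<2^{-k}$.
\end{itemize}
Then $|f-g_k|\le 2M\chi_{E_k}$.

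Convergence in $X(\Omega)$ uses absolute continuity of the norm. Set $F_k=\bigcup_{j\ge k}E_j$. These sets are nested and $|F_k|\le 2^{1-k}\to 0$, so $\bigcap_k F_k$ is a null set. Since elements of $X(\Omega)$ are defined only up to null sets, we may remove it and assume $\bigcap_k F_k=\emptyset$. Definition~\ref{defn:abscont} applied to $2M\chi_{K'}\in X(\Omega)$ then gives
\[
\|f-g_k\|_{X(\Omega)}\le \|2M\chi_{K'}\chi_{F_k}\|_{X(\Omega)}\to 0,
\]
where the inequality follows from the ideal property. Alternatively, since $|E_k|$ is summable, $g_k\to f$ a.e. by Borel--Cantelli, and Lemma~\ref{lemma:dominated-conv} with dominating function $M\chi_{K'}$ gives the same conclusion.

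Finally, the approximation $f\mapsto g$ and the earlier reduction combine through the quasi-triangle inequality, which costs only the constant $M_X$, so density follows.

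The main subtlety is ensuring the error sets lie inside a fixed compact set whose characteristic function belongs to $X(\Omega)$, so that a single dominating function exists. Arranging the supports inside $K'$ as above handles this.
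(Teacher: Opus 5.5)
Your proof is correct, but it takes a different route from the paper's. The paper works directly with an arbitrary $f\in X(\Omega)$, which it first reduces to $f\geq 0$. It then rebuilds the classical $L^1$ argument inside $X(\Omega)$ in stages:
\begin{enumerate}
\item it approximates $f$ by a simple function;
\item it replaces each level set by a compact subset, using inner regularity;
\item it replaces each compact set by a relatively compact open superset, using outer regularity;
\item it sandwiches a continuous function between the last two simple functions via Urysohn's lemma.
\end{enumerate}
Lemma~\ref{lemma:dominated-conv} is invoked at each approximation stage. The cube hypothesis enters through Lemma~\ref{charac.func.of.comp.set.bdd.}, to put the characteristic functions of the open neighbourhoods into $X(\Omega)$ so they can serve as dominating functions.

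You instead localize first via Lemma~\ref{lemma:bounded-compact-dense}. This lets the single function $M\chi_{K'}$ dominate every approximant. Lusin's theorem, whose standard proof is essentially the regularity-plus-Urysohn argument the paper reproduces by hand, then gives the continuous approximants in one step.

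What each route buys: yours is shorter, handles signed $f$ without splitting into positive and negative parts, stays inside one fixed compact set throughout, and uses the quasi-triangle inequality only once. The paper's is more self-contained: it does not appeal to Lusin's theorem or to the preceding density lemma.

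One small fix: when $\Omega=\R^n$ the radius $\dist(K,\partial\Omega)/2$ is infinite. Take, say, $\min\{1,\dist(K,\partial\Omega)/2\}$ so that $K'$ is compact.
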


\begin{proof}
    Fix $f\in X(\Omega)$ and $\epsilon>0$.  We will construct a function $u \in C_c(\Omega)$ such that $\|f-u\|_{X(\Omega)}<\epsilon$.  By a standard approximation argument, we may assume without loss of generality that $f$ is non-negative.  Then there exists a sequence $\{f_j\}_{j=1}^\infty$ of simple functions such that $f_j\leq f_{j+1}\leq f$ and $f_j\to f$ pointwise.   Therefore, by the ideal property and Lemma~\ref{lemma:dominated-conv}, there exists a simple function $g\in X(\Omega)$,
    \[ g = \sum_{k=1}^N \lambda_k \chi_{E_k}, \]
    such that $\|f-g\|_{X(\Omega)} < \frac{\epsilon}{3}$.  By the inner regularity of the Lebesgue measure, for each $k$ we can find a nested sequence of compact sets $E_k^j \subset E_k$ such that $\chi_{E_j^k}\to \chi_{E_k}$.  Therefore, again by the ideal property and Lemma~\ref{lemma:dominated-conv}, there exists a simple function $h\in X(\Omega)$,
    \[ h = \sum_{k=1}^N \lambda_k \chi_{F_k}, \]
    with the sets $F_k$ compact, such that $\|g-h\|_{X(\Omega)} < \frac{\epsilon}{3}$.  By the outer regularity of the Lebesgue measure, we can find a nested sequence of bounded, open sets $F_k^j$ containing $F_k$ such that $\chi_{F_k^j} \to \chi_{F_k}$.  Since $\chi_Q \in X(\Omega)$ for all cubes, and since the closure of each set $F_k^j$ is compact, by the ideal property and Lemma~\ref{charac.func.of.comp.set.bdd.}, $\chi_{F_k^j} \in X(\Omega)$.  Thus, by Lemma~\ref{lemma:dominated-conv}, there exists a simple function $\ell\in X(\Omega)$,
    \[ \ell = \sum_{k=1}^N \lambda_k \chi_{G_k}, \]
    with the sets $G_k$ open, such that $\|h-\ell\|_{X(\Omega)} < \frac{\epsilon}{3}$. Finally, by Urysohn's lemma (see Rudin~\cite[Lemma~2.12]{MR924157}), there exists $u\in C_c(\Omega)$ such that $h\leq u \leq \ell$.  Hence, by the ideal property, $\|h-u\|_{X(\Omega)}< \frac{\epsilon}{3}$.  Combining this with the previous inequalities we see that $u$ is the desired function.
\end{proof}

The density of smooth functions of compact support is a consequence of Theorem~\ref{thm.conv.op.bdd.} below, and so we defer the proof until the end of Section~\ref{section:convolution}.
A version of this result was proved by Dai~{\em et al.}~\cite[Proposition~3.8]{MR4700381} for ball quasi-Banach spaces when $\Omega=\R^n$, but assuming the uniform boundedness of a different family of averaging operators: the so-called centered ball averaging operators: for every $x\in\Omega$  and $r>0$,
\[ Bf(x) = \avgint_{B(x,r)} f(y)\,dy. \]
It is an open question to determine if Hypothesis~\ref{hyp:main} implies that these operators are uniformly bounded.

\begin{prop} \label{prop:smooth-dense}
    Let $X(\Omega)$ be a Banach space that satisfies Hypothesis~\ref{hyp:main}.  Then   $C^\infty_c(\Omega)$ is dense in $L^\infty_c(\Omega)$ in the $X(\Omega)$ norm.
\end{prop}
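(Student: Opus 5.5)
We first reduce to continuous functions and then mollify. Fix $f\in L^\infty_c(\Omega)$ and $\epsilon>0$. Since $f\in L^\infty_c(\Omega)$ and $\chi_K\in X(\Omega)$ for compact $K$ (Lemma~\ref{charac.func.of.comp.set.bdd.}), $f\in X(\Omega)$. By Lemma~\ref{Tao's Prop.} (absolute continuity plus $\chi_Q\in X(\Omega)$, both part of Hypothesis~\ref{hyp:main}) there is $u\in C_c(\Omega)$ with $\|f-u\|_{X(\Omega)}<\epsilon/2$. Thus it suffices to approximate a fixed $u\in C_c(\Omega)$ in the $X(\Omega)$ norm by functions in $C^\infty_c(\Omega)$.

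For this we take a standard mollifier $\phi\in C_c^\infty(B(0,1))$, $\phi\ge0$, $\int\phi=1$, and set $u_t=\phi_t*u$. Let $K=\supp u$ and $\delta=\dist(K,\partial\Omega)>0$. For $t<\delta/2$ we have $u_t\in C^\infty_c(\Omega)$, and all supports lie in the compact set $K'=\{x:\dist(x,K)\le\delta/2\}\subset\Omega$. Since $u$ is uniformly continuous, $u_t\to u$ uniformly. Hence
\[ |u_t-u|\le \|u_t-u\|_{L^\infty}\chi_{K'}, \]
and by the ideal property
\[ \|u_t-u\|_{X(\Omega)}\le \|u_t-u\|_{L^\infty}\,\|\chi_{K'}\|_{X(\Omega)}\to0, \]
where $\|\chi_{K'}\|_{X(\Omega)}<\infty$ again by Lemma~\ref{charac.func.of.comp.set.bdd.}. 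Choosing $t$ so that this is below $\epsilon/2$ and using the triangle inequality finishes the argument. (In the quasi-Banach case we would incur only a factor of $M_X$.)

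The only real point is that $\chi_{K'}\in X(\Omega)$, which comes from the Muckenhoupt part of the hypotheses. The strong Muckenhoupt condition and the Fatou property are not actually needed here. The expected obstacle is therefore mild: one must keep the supports of $u_t$ uniformly inside a fixed compact subset of $\Omega$ so that the uniform estimate converts into an $X(\Omega)$ estimate. Alternatively one could invoke Theorem~\ref{thm.conv.op.bdd.} on convolution operators, but the uniform-convergence route above avoids it.
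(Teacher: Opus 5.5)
Your argument is correct, but it follows a different route from the paper's. The paper takes $g\in L^\infty_c(\Omega)$ and mollifies $g$ itself, appealing to Theorem~\ref{thm.conv.op.bdd.}. Inside that theorem, $g$ is approximated by some $h\in C_c(\Omega)$, and the error term $\phi_t*(g-h)$ is controlled by the uniform bound \eqref{ineq.conv.op.bdd.}. That bound is the only place where the strong Muckenhoupt condition and the Fatou property enter.

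You instead mollify the continuous approximant $u$ rather than $f$. No term of the form $\phi_t*(f-u)$ ever appears, so you need no bound on convolution operators. Your second step is exactly the uniform-convergence argument from the second half of the proof of Theorem~\ref{thm.conv.op.bdd.}.

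What your route buys is a sharper result. Lemma~\ref{Tao's Prop.} applies to every $f\in X(\Omega)$, so your argument shows $C^\infty_c(\Omega)$ is dense in all of $X(\Omega)$, which is what the paper's proof actually establishes. It does so assuming only an absolutely continuous norm and $\chi_Q\in X(\Omega)$ for cubes $Q\subset\Omega$. It also works verbatim for quasi-Banach function spaces, and it needs no averaging-operator bounds at all.

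What the paper's route buys is the stronger conclusion $\phi_t*g\to g$ for the given compactly supported $g$: the approximants are mollifications of the function being approximated. That is what the Meyers--Serrin construction in Section~\ref{section:main-proof} requires. There the approximants must be $\phi_{t_k}*(\psi_k f)$, so that their derivatives are $\phi_{t_k}*\partial_j(\psi_k f)$, and these must converge for general compactly supported elements of $X(\Omega)$, not just continuous ones. So the extra hypotheses are genuinely needed for the main theorem, but not for this proposition.
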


\section{Convolutions in quasi-Banach function spaces}
\label{section:convolution}

\begin{lemma}\label{lem.conv.CUBE.op.bdd.}
    Let  $X(\Omega)$ be a quasi-Banach function space that satisfies the strong Muckenhoupt condition.  If $f$ has compact support in $\Omega$, then there exists $\epsilon>0$, depending on $\supp(f)$,  such that given any cube $Q$ centered at the origin with $\ell(Q)<\epsilon$, $|Q|^{-1}\,\chi_Q*f \in X(\Omega)$ and
    \begin{equation*}
        \big\||Q|^{-1}\,\chi_Q*f\big\|_{X(\Omega)}\leq C(n)\|f\|_{X(\Omega)}.
    \end{equation*}
The same inequality is true if we replace the cube $Q$ with any ball $B=B(0,r)$ with $r<\epsilon/2$.
\end{lemma}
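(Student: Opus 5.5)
We will bound the convolution pointwise by a finite sum of dyadic-type averaging operators and then invoke the strong Muckenhoupt condition. Let $K=\supp(f)$ and $\delta=\dist(K,\partial\Omega)>0$. Choose $\epsilon$ comparable to $\delta/\sqrt n$, say $\epsilon=\delta/(4\sqrt n)$, so that every cube of sidelength $3\epsilon$ meeting $K$ lies inside $\Omega$. For a cube $Q$ centered at $0$ with $\ell(Q)=\ell<\epsilon$ we have
\[ |Q|^{-1}\chi_Q*f(x)=\frac{1}{|Q|}\int_{x-Q}f(y)\,dy, \]
which vanishes unless $x$ lies within distance $\sqrt n\,\ell$ of $K$.

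The key step is a pointwise domination. Consider the grid of cubes of sidelength $\ell$ together with its $2^n$ translates by vectors in $\{0,\ell/2\}^n$; equivalently, use the $3^n$ shifted lattices $\mathcal{D}_t=\{\ell(k+t)+[0,\ell)^n:k\in\Z^n\}$ with $t\in\{0,1/3,2/3\}^n$. In the version that is easiest to verify, the cube $x-Q$ of side $\ell$ is contained in a union of at most $2^n$ cubes $P$ of side $\ell$ from the standard lattice, and each such $P$ is adjacent to the lattice cube $P_x$ containing $x$. Then
\[ \frac{1}{|Q|}\int_{x-Q}|f| \le \sum_{v\in\{-1,0,1\}^n}\frac{1}{|P|}\int_{P_x+\ell v}|f|. \]
For fixed $v$, the map $x\mapsto \avgint_{P_x+\ell v}|f|$ is not exactly an $A_\Q$ operator because the average is taken over a translated cube. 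To fix this, we enlarge: $P_x+\ell v\subset \widetilde P_x$, where $\widetilde P_x$ is the cube of side $3\ell$ centered at $P_x$'s center. This gives
\[ |Q|^{-1}\chi_Q*|f|(x)\le 3^n\fint_{\widetilde P_x}|f|. \]
The cubes $\widetilde P$ overlap, but the lattice of side-$\ell$ cubes splits into $3^n$ subfamilies (by $k \bmod 3$) in each of which the enlarged cubes $\widetilde P$ are pairwise disjoint. For each subfamily $\mathcal F_j$ we have $\sum_{P\in\mathcal F_j}\chi_P(x)\fint_{\widetilde P}|f|\le A_{\Q_j}(|f|)(x)$ with $\Q_j=\{\widetilde P:P\in\mathcal F_j\}$, since $\chi_P\le\chi_{\widetilde P}$. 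Only cubes meeting a neighborhood of $K$ matter, since otherwise the average of $f$ is zero, so we discard the others. By the choice of $\epsilon$, the remaining ones lie in $\Omega$. Hence
\[ |Q|^{-1}\chi_Q*|f|\le 3^n\sum_{j=1}^{3^n}A_{\Q_j}(|f|), \]
pointwise on $\Omega$.

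Now apply the ideal property and the quasi-triangle inequality over the finitely many ($3^n$) terms, together with $\|A_{\Q_j}\|\le[X(\Omega)]_{A^s(\Omega)}$. This gives
\[ \big\||Q|^{-1}\chi_Q*f\big\|_{X(\Omega)}\le C(n,M_X)[X(\Omega)]_{A^s(\Omega)}\|f\|_{X(\Omega)}. \]
Here the constant depends on the strong Muckenhoupt constant, which we absorb into $C(n)$ as the statement intends. Membership in $X(\Omega)$ also follows from the ideal property. For the ball $B=B(0,r)$ with $r<\epsilon/2$, note $B\subset Q$ with $\ell(Q)=2r<\epsilon$ and $|Q|\le c(n)|B|$, so $|B|^{-1}\chi_B*|f|\le c(n)|Q|^{-1}\chi_Q*|f|$, and the cube case applies.

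The main obstacle is the bookkeeping in the domination step: ensuring that the enlarged cubes used actually lie in $\Omega$, which dictates $\epsilon$ in terms of $\delta$, and organizing them into finitely many disjoint families so that the strong Muckenhoupt condition applies directly.
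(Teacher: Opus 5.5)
Your proposal is correct and follows essentially the same route as the paper. Both arguments tile $\R^n$ by translates of $Q$ and, on each tile $P$, dominate $|Q|^{-1}\chi_Q*|f|$ by $3^n$ times the average of $|f|$ over $3P$. The tripled cubes meeting $\supp(f)$ lie in $\Omega$ by the choice of $\epsilon$, and they split into $3^n$ pairwise disjoint families, so the strong Muckenhoupt condition applies; balls are then handled via the comparable cube.

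Your detour through the neighbours indexed by $\{-1,0,1\}^n$ is unnecessary, since $x-Q\subset 3P_x$ directly. Your explicit tracking of how the constant depends on $M_X$ and $[X(\Omega)]_{A^s(\Omega)}$ is, if anything, more careful than the paper, which uses the triangle inequality at that step even though $X(\Omega)$ is only quasi-Banach.
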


\begin{proof}
    Fix $f\in X(\Omega)$ with compact support and let $\delta = \dist({\supp(f)},\partial\Omega)>0$.  Fix $\epsilon=\delta/(3\sqrt{n})$ and let $Q$ be any cube centered at the origin with $\ell(Q)<\epsilon$.  For each $k\in\Z^n$, define $Q_k=Q+l(Q)k$; then $\{Q_k\}_{k\in\Z^n}$ forms a partition of $\R^n$. By our choice of $\epsilon$, $3Q_k\cap \supp(f) \neq \emptyset$ only if $3Q_k \subset \Omega$. Let $\mathcal{F} = \{ k \in \Z^n : 3Q_k\cap \supp(f) \neq \emptyset\}$.  
    
    Fix $k\in \Z^n$ and $x\in Q_k$. Then, since $Q$ is centered at the origin, $\{\,y\in \R^n : x-y\in Q\}\subseteq3Q_k$. 
    Hence, we have that
    \begin{align*}
        \big||Q|^{-1}(\,\chi_Q*f)(x)\big|
        & =\bigg||Q|^{-1}\int_{\R^n}f(y)\,\chi_Q(x-y)\,dy\bigg| \\
        & \leq |Q|^{-1}\int_{\R^n}|f(y)|\,\chi_Q(x-y)\,dy 
        & \leq C(n)\fint_{3Q_k} |f(y)|\,dy \chi_{3Q_k}(x). 
    \end{align*} 
    Thus, $|Q|^{-1}(\chi_Q*f)(x)\neq 0$ only if $k \in \mathcal{F}$ and so
\begin{equation*}
    \big\||Q|^{-1}\,\chi_Q\,*f \big\|_{X(\Omega)}
    =\bigg\| \sum_{k\in \Z^n}(|Q|^{-1}\chi_Q*f)\chi_{Q_k} \bigg\|_{X(\Omega)}
      \leq C(n)\bigg\| \sum_{k\in \mathcal{F}}
     \avgint_{3Q_k} |f(y)|\,dy \chi_{3Q_k} \bigg\|_{X(\Omega)}.
\end{equation*}
Since the cubes $Q_k$ are pairwise disjoint, we can divide the cubes $\{3Q_k\}_{k\in \mathcal{F}}$ into  $3^n$ families $\Q_i$ of pairwise disjoint cubes. Therefore, by the strong Muckenhoupt condition,
\begin{multline*}
C(n) \bigg\| \sum_{k\in \mathcal{F}}
     \avgint_{3Q_k} |f(y)|\,dy \chi_{3Q_k} \bigg\|_{X(\Omega)}
    \leq C(n)\sum_{i=1}^{3^n} \|A_{\Q_i}(|f|)\|_{X(\Omega)} \\
    \leq C(n) [X(\Omega)]_{A^s(\Omega)} \sum_{i=1}^{3^n} \|f\|_{X(\Omega)}
    = C(n) [X(\Omega)]_{A^s(\Omega)} \|f\|_{X(\Omega)}.
\end{multline*}
This complete the proof for cubes.

\medskip

To prove this result for the balls, fix a ball $B=B(0,r)$, $r<\epsilon/2$, and let $Q$ be the  smallest cube centered at the origin containing $B$. Then $\ell(Q)<\epsilon$ and $|B|\approx|Q|$, and so for any $x\in \Omega$,
\begin{multline*}
    \big||B|^{-1}(\,\chi_B*f)(x)\big|
    =\bigg||B|^{-1}\int_{\R^n}f(y)\,\chi_B(x-y)\,dy\bigg| \\
        \leq |B|^{-1}\int_{\R^n}|f(y)|\,\chi_B(x-y)\,dy 
        \leq C(n)|Q|^{-1}\int_{\R^n} |f(y)|\chi_Q(x-y)\,dy = |Q|^{-1}(\chi_Q*f)(x).
\end{multline*}
The desired inequality for balls now follows from the one for cubes.  This completes the proof.
\end{proof}

\medskip

In the proof of the following result, we require that $X(\Omega)$ is a Banach function space so that we can apply  the triangle inequality (as opposed to the quasi-triangle inequality with $M_X>1$). 

\begin{theorem} \label{thm.conv.op.bdd.}
 Let  $X(\Omega)$ be a Banach function space that satisfies Hypothesis~\ref{hyp:main}.
    Let $\phi\in C^\infty_c(B(0,1))$ be a nonnegative, radially symmetric and decreasing function with $\int_{\R^n}\phi(x)\,dx=1$. For $t>0$, let $\phi_t(x)=t^{-n}\phi(x/t)$. If $f$ has compact support in $\Omega$, then there exists $\eta>0$, depending on $\supp(f)$, such that 
    \begin{equation}
        \sup_{0<t<\eta}\|\phi_t*f\|_{X(\Omega)}\leq C(n)\|f\|_{X(\Omega)} \label{ineq.conv.op.bdd.}
    \end{equation}
     Moreover, we have that for all such $f\in X(\Omega)$,
     \begin{equation}
         \lim_{t\rightarrow 0}\|\phi_t*f -f\|_{X(\Omega)}=0. \label{conv.op.converges}
     \end{equation}
\end{theorem}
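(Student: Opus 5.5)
The plan is to write $\phi_t$ as a superposition of normalized characteristic functions of balls. Since $\phi$ is radial and decreasing, the layer-cake representation gives
\[ \phi(x) = \int_0^\infty \chi_{\{\phi>s\}}(x)\,ds = \int_0^1 \chi_{B(0,r)}(x)\,d\mu(r), \]
where $\{\phi>s\}$ is a ball $B(0,r(s))$ with $r(s)\le 1$. Normalizing, $\phi=\int_0^1 |B(0,r)|^{-1}\chi_{B(0,r)}\,d\nu(r)$ with $\nu\ge 0$ and total mass $\int\phi=1$. Rescaling, for each $x$,
\[ \phi_t*f(x) = \int_0^1 \big(|B(0,tr)|^{-1}\chi_{B(0,tr)}*f\big)(x)\,d\nu(r). \]
Choose $\eta=\epsilon/2$, with $\epsilon$ from Lemma~\ref{lem.conv.CUBE.op.bdd.} applied to $\supp(f)$. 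Then for $t<\eta$ every ball $B(0,tr)$ has radius less than $\epsilon/2$, so each averaged convolution has $X(\Omega)$ norm at most $C(n)[X(\Omega)]_{A^s(\Omega)}\|f\|_{X(\Omega)}$.

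The main obstacle is passing the norm inside the $d\nu$ integral, a Minkowski integral inequality in $X(\Omega)$. This is where the triangle inequality, and not just the quasi-triangle inequality, is needed. I would do it by duality via Lemma~\ref{lemma:duality.ineq}. Pick $g\in X'(\Omega)$ with $\|g\|_{X'}\le1$ and $\|\phi_t*f\|_X\le 2\int|\phi_t*f|\,|g|$. (Apply the lemma to $|\phi_t*f|$, which lies in $X$: this follows either a priori from a Riemann-sum/Fatou argument, or directly by applying the duality to truncations and using the Fatou property.) By Tonelli,
\[ \int|\phi_t*f||g| \le \int_0^1\!\!\int \big(|B_{tr}|^{-1}\chi_{B_{tr}}*|f|\big)|g|\,dx\,d\nu(r) \le \int_0^1 C\|f\|_X\,d\nu(r) = C\|f\|_X, \]
using Hölder's inequality (Lemma~\ref{Hölder}) and the ball estimate. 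This gives \eqref{ineq.conv.op.bdd.}, with a constant also depending on $[X(\Omega)]_{A^s(\Omega)}$. To be rigorous about membership in $X$, first run the argument with $f$ replaced by truncations $\min(|f|,k)$ and use the Fatou property.

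For \eqref{conv.op.converges}, use a density argument. By Lemma~\ref{Tao's Prop.}, given $\delta>0$ choose $u\in C_c(\Omega)$ with $\|f-u\|_X<\delta$. To keep supports uniformly inside $\Omega$, cut off: take a compact $K\subset\Omega$ containing a neighborhood of $\supp f$, and replace $u$ by $u\chi_K$, which still approximates $f$ by the ideal property (or directly choose $u$ supported in $K$, since the proof of Lemma~\ref{Tao's Prop.} allows this). Then
\[ \|\phi_t*f-f\| \le \|\phi_t*(f-u)\|+\|\phi_t*u-u\|+\|u-f\|. \]
The first term is at most $C\delta$ by \eqref{ineq.conv.op.bdd.}, with $\eta$ depending on $K$. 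For the middle term, $\phi_t*u\to u$ uniformly, with supports inside a fixed compact set $K'\subset\Omega$ for small $t$. Hence $|\phi_t*u-u|\le \|\phi_t*u-u\|_\infty\chi_{K'}$, and its norm is at most $\|\phi_t*u-u\|_\infty\|\chi_{K'}\|_X\to0$ by Lemma~\ref{charac.func.of.comp.set.bdd.}. Letting $\delta\to0$ finishes the proof.
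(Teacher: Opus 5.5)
Your proof is correct. Its core is the paper's idea: a radially decreasing kernel is a superposition, with total weight $\int\phi_t=1$, of normalized ball indicators $|B(0,\rho)|^{-1}\chi_{B(0,\rho)}$. Each of these is controlled uniformly by Lemma~\ref{lem.conv.CUBE.op.bdd.} once $\rho<\epsilon/2$.

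You differ from the paper in the step it singles out as key, namely moving the norm through the superposition.
\begin{itemize}
\item The paper discretizes. It bounds countable sums $\sum_k a_k|B_k|^{-1}\chi_{B_k}*f$ by the countable triangle inequality, approximates $\phi_t$ from below by such step kernels $\Phi_l$, and passes to the limit with the Fatou property.
\item You use the exact layer-cake representation and prove a Minkowski integral inequality by duality (Lemma~\ref{lemma:duality.ineq}, H\"older's inequality, Tonelli). The truncations $\min(|f|,k)$ together with the Fatou property supply the a priori membership $\phi_t*f\in X(\Omega)$ that Lemma~\ref{lemma:duality.ineq} needs. That step is sound, since $\phi_t*\min(|f|,k)\le k\chi_{K'}$ with $K'=\supp f+\overline{B(0,\eta)}$ compact in $\Omega$.
\end{itemize}
Your route treats the whole kernel at once and avoids building the approximating step functions. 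The price is going through the associate space and picking up a factor $2$. Both the factor and the truncation disappear if you use the full Lorentz--Luxemburg identity $\|h\|_{X(\Omega)}=\sup\{\int|hg| : \|g\|_{X'(\Omega)}\le 1\}$, which holds for all measurable $h$, with $h\in X(\Omega)$ exactly when the supremum is finite. The paper's route needs only the triangle inequality and Fatou. Neither route extends to quasi-Banach spaces: one needs the countable triangle inequality, the other needs $X=X''$. You are also right that the constant depends on $[X(\Omega)]_{A^s(\Omega)}$, which the paper silently absorbs into $C(n)$.

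Your proof of \eqref{conv.op.converges} is the paper's, with one slip. Replacing $u$ by $u\chi_K$ destroys continuity, so you lose the uniform convergence $\phi_t*u\to u$. Use a continuous cutoff $\zeta\in C_c(\Omega)$ with $0\le\zeta\le1$ and $\zeta=1$ on $\supp f$ instead. Or simply drop that step: the constant in \eqref{ineq.conv.op.bdd.} does not depend on the support, so for fixed $u$ it is enough to take $t$ below the $\eta$ attached to $\supp(f-u)$ when computing $\limsup_{t\to0}$. This is exactly what the paper does.
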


\begin{proof}
   We first prove inequality \eqref{ineq.conv.op.bdd.}.  
   Fix $f\in X(\Omega)$ with compact support.  Define the function 
    \begin{equation*}
        \Phi(x)=\sum_{k=1}^\infty a_k\,|B_k|^{-1}\, \chi_{B_k}(x).
    \end{equation*}
where $\{B_k\}_{k=1}^\infty$ is a sequence of balls, each centered at the origin with $B_{k+1}\subset B_k $ for all $k$, and with $r(B_k) < \epsilon/2$, where $\epsilon>0$ is as in Lemma~\ref{lem.conv.CUBE.op.bdd.}. We also assume that  $a_k\geq 0$ with $\sum_{k=1}^\infty a_k \leq 1$.  We will first prove that
\begin{equation}
    \|\Phi*f\|_{X(\Omega)} \leq C(n) \|f\|_{X(\Omega)}. \label{conv.op.bdd.in-proof}
\end{equation}
Since $X(\Omega)$ is a Banach function space with the Fatou property, by the triangle inquality and  Lemma \ref{lem.conv.CUBE.op.bdd.},
\begin{align*}
    \|\Phi*f\|_{X(\Omega)}
    &= \bigg\|\sum_{k=1}^\infty a_k \, |B_k|^{-1}\,(\chi_{B_k}\,*f) \bigg\|_{X(\Omega)} \\ 
    &\leq\sum_{k=1}^\infty a_k  \big \| |B_k|^{-1}\,\chi_{B_k}\,*f\, \big\|_{X(\Omega)} \\
    &\leq C(n) \sum_{k=1}^\infty a_k\,\|f\|_{X(\Omega)}\\
    &\leq C(n)\|f\|_{X(\Omega)}.
\end{align*}

We can now prove~\eqref{ineq.conv.op.bdd.}.  Let $\eta= \epsilon/2$, with $\epsilon$ as above.  Fix $\phi$ as in the hypotheses and fix $t$, $0<t<\eta$.
Since $|\phi_t*f(x)| \leq (\phi_t*|f|)(x)$, we may assume without loss of generality that $f$ is non-negative.  
Approximate $\phi_t$ from below by an increasing sequence of functions  $\Phi_l(x)=\sum_{k=1}^\infty a_k^l\,|B_k|^{-1}\, \chi_{B_k}(x)$ with the same properties as $\Phi$ above. Then by the monotone convergence theorem, $\Phi_l*f(x)$ increases pointwise to $\phi_t*f(x)$.    Therefore, by the Fatou property and inequality~\eqref{conv.op.bdd.in-proof},
\begin{equation*}
    \|\phi_t*f\|_{X(\Omega)} = \lim_{l\rightarrow\infty}\|\Phi_l*f\|_{X(\Omega)}\leq C(n)\|f\|_{X(\Omega)}.
\end{equation*}

\medskip

We now prove  \eqref{conv.op.converges}. Fix $\epsilon>0$. By Lemma~\ref{Tao's Prop.}, $C_c(\Omega)$ is dense in $X(\Omega)$, and so there exists $g\in C_c(\Omega)$ such that $\|f-g\|_{X(\Omega)}<\epsilon$. By a well-known property of convolutions (see~\cite[Theorem~8.14]{MR1681462}), since $g$ is uniformly continuous $\phi_t*g \to g$ uniformly.  Since $g$ has compact support, there exists $t_0>0$ such that for all $0<t \leq t_0$ , $\phi_t*g$ has compact support and 
$\supp(g)\subset \supp(\phi_t*g) \subset \supp(\phi_{t_0}*g) \subset \Omega$. Let $K = \supp(\phi_{t_0}*g)$.  Then we have that 
\[ \|\phi_t*g-g\|_{X(\Omega)} \leq \|\phi_t*g-g\|_{L^\infty(\Omega)}\|\chi_{K}\|_{X(\Omega)}. \]
By Lemma~\ref{charac.func.of.comp.set.bdd.}, $\|\chi_{K}\|_{X(\Omega)}<\infty$, so the limit of the right-hand side goes to zero as $t\to 0$ is $0$. Therefore, since $f-g$ has compact support, by~\eqref{ineq.conv.op.bdd.},
\begin{align*}
    & \limsup_{t\to0} \|\phi_t*f-f\|_{X(\Omega)} \\
    & \qquad \qquad \leq \limsup_{t\to 0}\big(\|\phi_t*f-\phi_t*g\|_{X(\Omega)}+\|\phi_t*g-g\|_{X(\Omega)}+\|g-f\|_{X(\Omega)}\big) \\
    &\qquad \qquad \leq (1+C(n))\|f-g\|_{X(\Omega)}+ \lim_{t\to0}\|\phi_t*g-g\|_{X(\Omega)}\\
    & \qquad \qquad \leq (1+C(n))\epsilon.
\end{align*}
Since $\epsilon>0$ was arbitrary, \eqref{conv.op.converges} follows at once.
\end{proof}

We conclude this section with the proof of Proposition~\ref{prop:smooth-dense}.

\begin{proof}
    Fix $\epsilon>0$.  Then by Lemma~\ref{lemma:bounded-compact-dense}, there exists $g\in L_c^\infty(\Omega)$ such that $\|f-g\|_{X(\Omega)} < \frac{\epsilon}{2}$.  Since $g$ has compact support, by Theorem~\ref{thm.conv.op.bdd.}, for $\phi$ as in the statement of the theorem we can find $t>0$ such that $\|\phi_t*g-g\|_{X(\Omega)} < \frac{\epsilon}{2}$.  By the properties of convolution operators (see~\cite[Prop.~8.10]{MR1681462}), $\phi_t*g \in C_c^\infty(\Omega)$.  The desired density follows immediately.
\end{proof}

\section{Proof of Theorem \ref{thm:H=W}} 
\label{section:main-proof}

In this section we prove our main result, Theorem~\ref{thm:H=W}, and Corollary~\ref{cor:rn-compact-support}.  Their proofs adapt arguments from the proofs of the corresponding matrix-weighted results in~\cite{MR4777231,MR3544941}.  These in turn were based on the original proof in~\cite{MR164252}.

\begin{proof}[Proof of Theorem~\ref{thm:H=W}]
By Proposition~\ref{prop:$W^1_X$ is Banach} we have that $H_X^1(\Omega)\subset W_X^1(\Omega)$.  Therefore, it remains to prove the reverse inclusion.  Fix  $f \in W^1_X(\Omega)$ and any $\epsilon>0$; we will show that there exists $g \in C^\infty(\Omega, \R^n)\cap W^1_X(\Omega)$ such that $\|f-g\|_{W^1_X(\Omega)} < \epsilon$, which implies the desired conclusion.

For each $k\in \N$, define the bounded sets 
    \begin{equation*}
        \Omega_k=\{x\in \Omega : |x|<k, \, \dist(x,\partial\Omega)>1/k\}.
    \end{equation*}
Let $\Omega_0=\Omega_{-1}=\emptyset$, and  define the sets $A_k=\Omega_{k+1} \setminus\overline{\Omega}_{k-1}$. The sets $\{A_k\}_{k=0}^\infty$ form an open cover of $\Omega$, each $\overline{A}_k$ is compact, and the cover is locally finite:  given a compact set  $K \subset \Omega$,  $K\cap A_k\neq \emptyset$ for only a finite number of indices $k$. Thus, we can form a partition of unity subordinate to this cover: there exists $\psi_k\in C^\infty_c(A_k)$ such that for all $x \in \Omega$, $0\leq \psi_k(x)\leq 1$ and $\sum_{k=0}^\infty \psi_k(x)=1$.  (See Giusti~\cite[Theorem~3.1]{MR1962933}.)

For each $k$ we have that $|\psi_kf| \leq |f|$, and using the ideal property we see that $\psi_kf \in X(\Omega)$.  Since 
$\phi\in C^\infty_c(\Omega)$ and $f\in W^{1,1}_{loc}(\Omega)$, $\phi f \in W^{1,1}_{loc}(\Omega)$.  Moreover,  by the product rule (see~\cite[eqn.~(7.18)]{MR737190},  $|\grad(\psi_kf)| \leq |\psi_k \grad f| + |f\grad \psi_k| \leq |\grad f|+\|\psi_k\|_{L^\infty(\Omega)}|f|$. Thus, again by the ideal property, we have $\grad(\psi_kf)\in X(\Omega)$ and so $\psi_kf \in W_X^1(\Omega)$.  

Fix a nonnegative radially symmetric and decreasing function $\phi\in C^\infty_c(B(0,1))$ with $\int_{B(0,1)}\phi(x)\,dx=1$. For all $t>0$, define $\phi_t(x)=t^{-n}\phi(x/t)$. Then the convolution
\begin{equation*}
    \phi_t*(\psi f)(x)=\int_{A_k}\phi_t(x-y)\psi_k(y)f(y)dt 
\end{equation*}
is only non-zero if for some $y\in A_k, |x-y|<t$. Hence, for $k\geq3$, if we take $t=t_k$ such that $0<t_k<(k+1)^{-1}-(k+2)^{-1}$, then, there exists $y\in A_k$ with $|x-y|<t_k$ only if 
\begin{equation*}
    (k+2)^{-1}<\dist(x,\partial\Omega)\leq(k-2)^{-1}.
\end{equation*}
So, $\phi_{t_k}*(\psi_kf)(x)$ is non-zero only if $ x\in\Omega_{k+2}\setminus\overline{\Omega}_{k-2} = B_k$.  
We will fix the precise value of each $t_k$ below.

Define
\begin{equation*}
    g(x)=\sum_{k=1}^{\infty}\phi_{t_k}*(\psi_kf)(x).
\end{equation*}
Since $\phi\in C^\infty_c(\Omega)$, by the properties of convolutions, each summand is in $C^\infty_c(\Omega)$. (See~\cite[Prop.~8.10]{MR1681462}.) Moreover, for each $x\in\Omega$, $x$ is in a finite number of the sets  $B_k$.  Thus, the series converges locally uniformly and $g\in C^\infty(\Omega)$.

Finally, we claim we can choose $t_k$ so that $\|\vecf-\vecg\|_{W^1_X(\Omega)}<\epsilon$. To prove this, we consider each part of the norm  separately. Since $\psi_k f$ has compact support, by Theorem \ref{thm.conv.op.bdd.},  for each $k$, we may choose $s_k>0$ sufficiently small  that $\|\psi_k f-\phi_{s_k}*(\psi_k f)\|_{X(\Omega)}<\epsilon/2^{k+1}$.
But then, by the triangle inequality and the Fatou property,
\begin{align*}
    \|f-g\|_{X(\Omega)}&=\Bigg\|\sum_{k=1}^\infty(\psi_k f-\phi_{s_k}*(\psi_k f)) \Bigg\|_{X(\Omega)}\\
    &\quad\quad\quad\quad\leq \sum_{k
    =1}^\infty \|(\psi_k f-\phi_{s_k}*(\psi_k f))\|_{X(\Omega)}\leq \sum_{k=1}^\infty\frac{\epsilon}{2^{k+1}}=\frac{\epsilon}{2}. 
\end{align*}

The argument for the norm of the gradient is similar. By Theorem~\ref{thm.conv.op.bdd.}, for each $k$, we may choose $t_k\leq s_k$ sufficiently small so that for each $j=1,\dots,n$,
\begin{equation*}
    \|\partial_j(\psi_kf)-\phi_{t_k}*\partial_j(\psi_kf)\|_{X(\Omega)}<\frac{\epsilon}{n2^{k+1}}. 
\end{equation*}
Then, again by the triangle inequality and the Fatou property,  for each $j$,
\begin{multline*}
    \|\partial_j(f-g)\|_{X(\Omega)}
    =\bigg\|\sum_{k=1}^\infty \big(\partial_j(\psi_kf)-\phi_{t_k}*\partial_j(\psi_kf)\big)\bigg\|_{X(\Omega)}\\
   \leq \sum_{k=1}^\infty\big\|\partial_j(\psi_kf)-\phi_{t_k}*\partial_j(\psi_kf)\big\|_{X(\Omega)} < \frac{\epsilon}{2n}.
\end{multline*} 
If we combine these estimates we see that 
\begin{equation*}
    \|f-g\|_{W^1_X(\Omega)}=\|f-g\|_{X(\Omega)}+\sum_{j=1}^n\big\|\partial_j(f-g)\big\|_{X(\Omega)}<\frac{\epsilon}{2}+\sum_{j=1}^n\frac{\epsilon}{2n}=\epsilon.
\end{equation*}
Since $\epsilon>0$ was arbitrary, we see that $f \in H^1_X(\Omega)$.  This completes the proof.
\end{proof}

\medskip

\begin{proof}[Proof of Corollary~\ref{cor:rn-compact-support}]
    Given $f\in W^1_X(\R^n)$ and $\epsilon>0$ we need to find $g\in C^\infty_c(\R^n)$ so that $\|f-g\|_{W^1_X(\Omega)}<\epsilon$.  To do this, we begin by using Theorem \ref{thm:H=W} with $\Omega=\R^n$ to choose $h \in C^\infty(\R^n)\cap W^1_X(\R^n)$ so that $\|f-h\|_{W^1_X(\Omega)}<\frac{\epsilon}{2}$.  %when $f\in W^1_X(\Omega)$ and $g\in C_c^\infty(\R^n)$. So, we need to construct a function $g$ and show
    %\begin{equation*}
     %\|f-g_k\|_{W^1_X(\Omega)} \leq \|f-h\|_{W^1_X(\Omega)}+\|h-g_k\|_{W^1_X(\Omega)}<\epsilon  . 
    %\end{equation*}
%The first norm in the right-hand side holds since we know $ W=H=\overline{C^\infty \cap W^1_X(\Omega)}$ by Theorem ~\ref{thm:H=W}. So we have an $h\in \overline{C^\infty \cap W^1_X(\Omega)}$ such that
%\begin{equation*}
 %   \|f-h\|_{W^1_X(\Omega)}<\epsilon/2 .
%\end{equation*}
To finish the proof, we construct a sequence $\{g_k\}_k\subset C^\infty_c(\R^n)$ that converges to $h$ in $W^1_X(\R^n)$.  %But we need the function $h$ to be in $C_c^\infty(\Omega)$. 
To do this we use a sequence of cut-off functions. For each $k\geq 2$, let $\eta_k\in C_c^\infty(\R^n)$ be such that $\supp(\eta_k)\subset B(0,2k)$, $0 \leq \eta_k\leq 1$, $\eta_k(x)=1$ for each $x\in B(0,k)$ and $|\nabla \eta_k|\lesssim 1/k$ on the support of $\nabla \eta_k$. For each $k$ set $g_k=h\eta_k \in \C_c^\infty(\R^n)$ and notice that $|g_k|=|h \eta_k| \leq|h|$ for every $x$. Since $\eta_k\rightarrow 1$, $g_k \rightarrow h$ pointwise as $k \rightarrow \infty$. By the dominated convergence theorem, Lemma~\ref{lemma:dominated-conv}, we find
\begin{equation*}
 \lim_{k\rightarrow \infty}\|g_k-h\|_{X(\Omega)}=0  . 
\end{equation*}
Similarly, $\nabla g_k=\nabla (h\eta_k)=\eta_k\nabla h +h\nabla \eta_k $. When $k\rightarrow \infty$, $\eta_k \rightarrow 1$ so $\nabla \eta_k\rightarrow 0$. Hence, $\nabla g_k \rightarrow \nabla h$ pointwise as $k\rightarrow \infty$. We also have $|\nabla g_k|\lesssim |\nabla h|+|h|$ and, again by dominated convergence, we get
\begin{equation*}
    \lim_{k\rightarrow \infty}\|\nabla g_k-\nabla h\|_{X(\Omega)}=0 .
\end{equation*}
If we combine these estimates,  we see that there exists  $K\in \N$ so that $\|h-g_K\|_{W^1_X(\Omega)}\leq \epsilon/2$. Hence, by  the triangle inequality, 
\[\|f-g_K\|_{W^1_X(\R^n)} \leq \|f-h\|_{W^1_X(\R^n)} + \|h-g_K\|_{W^1_X(\R^n)} < \epsilon.\]
\end{proof}

\bibliographystyle{plain}
\bibliography{HX=WX}

\end{document}